\documentclass[11pt]{article}
\usepackage{amsmath}
\usepackage{amsthm}
\usepackage{amssymb}
\usepackage{amscd}
\usepackage[dvipdfm]{hyperref}

\newcommand{\R}{\mathbb{R}}

\pagestyle{myheadings}

\theoremstyle{plain}
\newtheorem{theorem}{Theorem}[section]
\newtheorem{proposition}[theorem]{Proposition}

\newtheorem{Corollary}[theorem]{Corollary}

\theoremstyle{definition}
\newtheorem{definition}[theorem]{Definition}


\makeatletter
\@addtoreset{equation}{section}
\makeatother
\newcounter{constantLABEL}
\setcounter{constantLABEL}{0}

\newcommand{\cref}[1]{C_{\ref{#1}}}

\newcounter{constantslabel}
\setcounter{constantslabel}{0}



\begin{document}


\title{Stable sheaves with twisted sections and 
the Vafa--Witten equations on smooth projective surfaces} 


\author{Yuuji Tanaka}
\date{}


\maketitle


\begin{abstract}
This article describes a Hitchin--Kobayashi style correspondence for the
 Vafa--Witten equations on smooth projective surfaces. 
This is an equivalence between a suitable notion of stability for a pair
 $(\mathcal{E}, \varphi)$, where $\mathcal{E}$ is a locally-free sheaf
 over a surface $X$ and $\varphi$ is a section of $\text{End}
 (\mathcal{E}) \otimes K_{X}$; and the existence of a solution to
 certain gauge-theoretic equations, the Vafa--Witten equations, for a
 Hermitian metric on $\mathcal{E}$.  
It turns out to be a special case of results obtained by 
\'{A}lvarez-C\'{o}nsul and Garc\'{\i}a-Prada on the quiver vortex equation. 
In this article, we give an alternative proof which uses a
 Mehta--Ramanathan style argument 
originally developed by Donaldson for the Hermitian--Einstein problem, 
as it relates the subject with the Hitchin equations on Riemann
 surfaces, and surely indicates a similar proof of the existence of a
 solution under the assumption of stability 
 for the Donaldson--Thomas instanton equations described in 
\cite{tanaka2} on smooth projective
 threefolds; and more broadly that for the quiver vortex equation 
on higher dimensional
 smooth projective varieties.   
\end{abstract}



\section{Introduction}

In this article, we consider a set of gauge-theoretic equations on smooth
projective surfaces, introduced  
by Vafa and Witten \cite{VW} in the study of {\it $S$-duality} conjecture for 
$N=4$ supersymmetric Yang--Mills theory originally on closed four-manifolds, 
and recently discussed also by Haydys \cite{Ha} and \cite{W}
in the context of ``categorification'' of Khovanov homology.

The equation can be seen as a higher-dimensional
analogue of the Hitchin equation on compact Riemann surfaces
\cite{Hit}. 
The Hitchin equation is an equation for a pair consisting of a holomorphic
structure on a vector bundle $E$ over a Riemann surface $\Sigma$, 
and a holomorphic section $\Phi$ of the associated bundle $\text{End}\, (E) \otimes
K_{\Sigma}$, where $K_{\Sigma}$ is the canonical bundle of $\Sigma$.  
Simpson \cite{Si} generalized it to higher dimensions for a pair
$(\mathcal{E}, \theta)$, 
where $\mathcal{E}$ is a torsion-free sheaf on a projective variety $X$, 
and $\theta$ is a section of $\text{End} \, (\mathcal{E}) \otimes
\Omega^{1}_{X}$.   
The Vafa--Witten equation can be seen as an analogue of the Hitchin
equations for surfaces, but in a different way of that pursued by Simpson
mentioned above,
since it takes up a section of 
$\text{End}\, (E) \otimes K_{X}$ 
as an extra field, 
which is just the same as in the Hitchin case,    
 rather than that of $\text{End}\, (E) \otimes
\Omega^{1}_{X}$ as in the Simpson case.  
Also, the Donaldson--Thomas instanton equation on compact K\"{a}hler
threefolds, described in
\cite{tanaka2}, can be seen as a three-dimensional counterpart of the
Hitchin equation  
in the same way as the Vafa--Witten equations. 
More broadly, these equations can be seen as special cases of those
studied by \'{A}lvarez-C\'{o}nsul and Garc\'{\i}a-Prada \cite{AG} 
as the case of a twisted quiver bundle with one vertex and one arrow, 
whose head and tail coincide, and with twisting sheaf the anti-canonical
bundle.

\paragraph{The Vafa--Witten equations.}

Let us describe the equation in the original form first. 
Let $X$ be a closed, oriented, smooth Riemannian four-manifold with Riemannian
metric $g$, and let $P \to X$ be a principal $G$-bundle over $X$
with $G$ being a compact Lie group. 
We denote by $\mathcal{A}_{P}$ the set of all connections of $P$, 
and by $\Omega^{+} (X, \mathfrak{g}_{P})$ the set of self-dual two-forms
valued in the adjoint bundle $\mathfrak{g}_{P}$ of $P$. 
We consider the following equations for a triple $(A, B , \Gamma) \in
\mathcal{A}_{P} 
\times \Omega^{+} (X, \mathfrak{g}_{P}) \times \Omega^{0} (X, 
\mathfrak{g}_{P})$. 
\begin{gather}
d_{A} \Gamma + d_{A}^{*} B = 0, 
\label{VW1}\\ 
F_{A}^{+}  + [B.B ] +  [ B , \Gamma] =0 , 
\label{VW2}
\end{gather}
where $F_{A}^{+}$ is the self-dual part of the curvature of $A$, 
and $[B. B ] \in \Omega^{+} (X , \mathfrak{g}_{P})$ 
(See \cite[\S A.1]{BM}, or \cite[\S 2]{tanaka6} for its definition). 
We call these equations the {\it Vafa--Witten equations}. 
The above equations \eqref{VW1} and \eqref{VW2} with a gauge fixing
condition form an elliptic system with the index being always zero.

Mares studied analytic aspects of the Vafa--Witten equations
in his Ph.D thesis \cite{BM}. 
He also described the equations on compact K\"{a}hler surfaces, and
discussed a relation between the existence of a solution to the
equations 
and a stability of vector bundles as
mentioned below.

\paragraph{The equations on compact K\"{a}hler surfaces.}

Let $X$ be a compact K\"{a}hler surface, and let $E$ a Hermitian vector
bundle of rank $r$ over $X$. 
On a compact K\"ahler surface, the Vafa--Witten equations \eqref{VW1}
and \eqref{VW2} reduce to the following (see \cite[Chap.7]{BM} for the detail). 
\begin{gather*}
 \bar{\partial}_{A} \varphi = 0 , \\
F_{A}^{0,2} = 0 , \quad F_{A}^{1,1} \wedge \omega +  [ \varphi , 
 \bar{\varphi}] = \frac{i \lambda (E)}{2} Id_{E} \, \omega^2 ,
\end{gather*}
where $\varphi \in \Omega^{2,0} (X, \text{End}(E))$, and 
$\lambda (E) = 2 \pi c_{1} (E) \cdot [\omega] / r [ \omega ]^2$.

As we mentioned above, 
this can be seen as a generalization of the Hitchin equation \cite{Hit} 
to K\"{a}hler surfaces. 
In fact, the stability condition that we consider is an analogy of that to
the Hitchin equation.

\paragraph{Stability for pairs.}

We consider a pair $( \mathcal{E} , \varphi)$
consisting of a torsion-free sheaf $\mathcal{E}$ 
and a section $\varphi$ of $\text{End} (\mathcal{E}) \otimes K_{X}$, 
which satisfies a stability condition.  
The stability here is defined by a slope for $\varphi$-invariant
subsheaves similar to the Hitchin equation case \cite{Hit}.

Let $X$ be a compact K\"{a}hler surface,
and let $\mathcal{E}$ be a torsion-free sheaf on $X$, and 
let $\varphi$ be a section of $\text{End} 
(\mathcal{E} )\otimes K_{X}$.  
A subsheaf $\mathcal{F}$ of $\mathcal{E}$ 
is said to be a {\it $\varphi$-invariant} if 
$ \varphi  (\mathcal{F}) \subset \mathcal{F} \otimes K_{X}$.   
We define a {\it slope} $\mu (\mathcal{F})$ of a coherent subsheaf $\mathcal{F}$ of
$\mathcal{E}$ by 
$ \mu (\mathcal{F}) 
:= \frac{1}{\text{rank} ( \mathcal{F} )} \int_{X} 
c_1 ( \det \mathcal{F}) \wedge \omega $.

\begin{definition}
A pair $(\mathcal{E},\varphi)$ 
consisting of a torsion-free sheaf $\mathcal{E}$ 
and a section $\varphi$ of $\text{End} (\mathcal{E}) \otimes K_{X}$ 
is called {\it semi-stable} if 
$ \mu ( \mathcal{F} ) \leq \mu ( \mathcal{E} ) $ 
for any 
$\varphi$-invariant coherent subsheaf 
$\mathcal{F}$ with $\text{rank} (\mathcal{F}) 
< \text{rank} (\mathcal{E})$.  
A pair $(\mathcal{E}, \varphi)$ is called {\it stable} if 
$ \mu ( \mathcal{F} ) < \mu ( \mathcal{E} ) $ 
for any 
$\varphi$-invariant coherent subsheaf 
$\mathcal{F}$ with $\text{rank} (\mathcal{F}) 
< \text{rank} (\mathcal{E})$.  
\label{def:stable}
\end{definition}

\begin{definition}
A pair $(\mathcal{E} , \varphi)$ 
consisting of a torsion-free sheaf $\mathcal{E}$ 
and a section $\varphi$ of $\text{End} (\mathcal{E}) \otimes K_{X}$ 
is said to be {\it poly-stable}
if it is a direct sum of stable sheaves with the same slopes in the sense of Definition
\ref{def:stable}. 
\label{def:pstable}
\end{definition}

\paragraph{The Hitchin--Kobayashi correspondence for the Vafa-Witten equations.}

A correspondence we describe in this article is a one-to-one
correspondence between  the existence of 
a solution to the Vafa--Witten equations on  a
locally-free 
sheaf $\mathcal{E}$ on a smooth projective surface $X$ and the stability
in the sense of Definition \ref{def:stable}. 
This fits into the setting for the above mentioned twisted 
quiver bundles and the quiver vortex equation studied by 
\'{A}lvarez-C\'{o}nsul and Garc\'{\i}a-Prada \cite{AG} (see also
\cite{BGM}), 
and the correspondence turns out to be a special case of their results. 
We state it in our setting as follows.

\begin{theorem}[\cite{AG}]
Let $X$ be a K\"{a}hler surface with K\"{a}hler form $\omega$. 
Let $(\mathcal{E} , \varphi)$ be a pair consisting of a locally-free
 sheaf $\mathcal{E}$ on $X$ and a section $\varphi \in \text{End}\, (\mathcal{E})
 \otimes K_{X}$, where $K_{X}$ is the canonical bundle of $X$. 
Then, $(\mathcal{E} , \varphi)$ is poly-stable if and only if 
$\mathcal{E}$ admits a unique Hermitian metric $h$ satisfying 
$F_{h} + \Lambda [ \varphi , \bar{\varphi}^{h}] = i \frac{\lambda
 (E)}{2} Id_{E} \omega$, where $F_{h}$ is the curvature form of $h$, and
 $\Lambda := ( \wedge \omega )^{*}$. 
\end{theorem}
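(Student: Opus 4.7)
The plan is to treat the two directions separately. The direction ``metric implies polystability'' is a Chern--Weil computation, while the converse, which is the analytic heart of the statement, follows a continuity method whose closedness step relies on the Mehta--Ramanathan style restriction to curves alluded to in the introduction.

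\textbf{Easy direction.} Given a solution $h$ and a $\varphi$-invariant torsion-free subsheaf $\mathcal{F} \subset \mathcal{E}$, represent $\mathcal{F}$ by a weakly holomorphic $L^{2}_{1}$ orthogonal projection $\pi = \pi^{2} = \pi^{*}$ following Uhlenbeck--Yau. Pairing the equation with $\pi$, taking traces and integrating, the curvature term yields $\deg(\mathcal{F}) + \|\dbar\pi\|_{L^{2}}^{2}$ by the standard identity. The condition $\varphi\pi = \pi\varphi\pi$ together with a short block-matrix computation using that $\bar\varphi^{h}$ is the $h$-adjoint of $\varphi$ shows $\operatorname{tr}(\pi\cdot i\Lambda[\varphi,\bar\varphi^{h}]) = |\pi^{\perp}\varphi\pi|^{2}_{h} \ge 0$. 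Combining, one obtains $\mu(\mathcal{F}) \leq \mu(\mathcal{E})$, with equality forcing $\dbar\pi = 0$ and $[\pi,\varphi] = 0$, which produces the direct-sum decomposition of polystability.

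\textbf{Hard direction: continuity method.} Fix a smooth background metric $h_{0}$, write $h = h_{0}e^{s}$ with $s$ Hermitian with respect to $h_{0}$, and introduce the family
\begin{equation*}
L_{\epsilon}(s) := i\Lambda F_{h_{0}e^{s}} + i\Lambda[\varphi,\bar\varphi^{h_{0}e^{s}}] - \tfrac{\lambda(E)}{2}\operatorname{Id}_{E} + \epsilon s, \qquad \epsilon \in [0,1].
\end{equation*}
Solvability near $\epsilon = 1$ is immediate from the coercive term. Let $J \subset [0,1]$ denote the set of $\epsilon$ for which $L_{\epsilon}(s) = 0$ admits a solution. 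Openness of $J$ is routine from the implicit function theorem: the linearization of $L_{\epsilon}$ at a solution is a self-adjoint elliptic operator of the form $\Delta_{\dbar} + (\text{zeroth order in }\varphi) + \epsilon$, which is invertible for $\epsilon > 0$. Closedness reduces, after elliptic bootstrapping, to a uniform $C^{0}$ estimate on the family $\{s_{\epsilon}\}$ as $\epsilon \to 0$.

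\textbf{Main obstacle: the $C^{0}$ estimate.} The crux, exactly as in Donaldson's proof of the Hermitian--Einstein theorem on surfaces, is to show $\sup_{X}|s_{\epsilon}|_{h_{0}} = O(1)$ under the stability hypothesis. Arguing by contradiction, if $M_{\epsilon} := \|s_{\epsilon}\|_{L^{1}} \to \infty$, set $u_{\epsilon} := s_{\epsilon}/M_{\epsilon}$ and extract a non-trivial weak $L^{2}_{1}$ limit $u_{\infty}$; applying the functional calculus to $u_{\infty}$ produces weakly holomorphic projections onto proper $\varphi$-invariant subsheaves together with a formal destabilizing inequality. This inequality must be upgraded to an honest one, and here the Mehta--Ramanathan reduction enters: for $m \gg 0$ a generic smooth curve $C \in |mH|$ carries a restricted pair $(\mathcal{E}|_{C},\varphi|_{C})$ which is still stable, by a Mehta--Ramanathan theorem for $\varphi$-invariant pairs, and on $C$ the Vafa--Witten equation becomes the Hitchin equation, solvable by Hitchin's theorem on Riemann surfaces. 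Transferring the curve-level information back to $X$ by integrating over a pencil of such curves contradicts the destabilizing inequality on the surface. This restriction-and-transfer step is the main technical obstacle; once it is executed, $0 \in J$ and the existence claim follows. Uniqueness then follows from convexity of the associated Donaldson-type functional along geodesics in the space of Hermitian metrics.
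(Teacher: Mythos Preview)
Your ``easy direction'' is fine and standard. The issue is in the hard direction: you have blended two distinct strategies in a way that does not cohere, and the step you flag as the ``main technical obstacle'' is not a real mechanism.

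The paper does \emph{not} run a continuity method. It runs Donaldson's heat flow $\partial_t h_t = -(i\Lambda F_{h_t} + i*[\varphi,\bar\varphi^{h_t}] - \tfrac{\lambda}{2}\mathrm{Id})$ and introduces a Donaldson-type functional $D_\varphi(h,k)$ whose critical points are solutions. The Mehta--Ramanathan input is used for one specific purpose: to prove $D_\varphi(\cdot,k)_X$ is bounded below. This goes via the Poincar\'e--Lelong formula applied to the Bott--Chern secondary form, yielding $D_\varphi(h,k)_X \geq D_\varphi(h,k)_D - C$ for a single generic curve $D$; on $D$ the restricted pair is semi-stable (Mehta--Ramanathan), the equation becomes the $L$-twisted Hitchin equation with $L = K_D\otimes\mathcal{O}_D(-D)$ (adjunction --- not the ordinary Hitchin equation as you write), and the known existence theorem there bounds $D_\varphi(\cdot,k)_D$ below. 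Once the functional is bounded below, a maximum-principle argument forces the moment map to zero along the flow, and Uhlenbeck compactness plus removable singularities finish.

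Your sketch instead runs the Uhlenbeck--Yau/Simpson continuity method, whose blow-up analysis already produces an honest destabilizing $\varphi$-invariant subsheaf on $X$ itself --- that is the whole point of Simpson's extension of Uhlenbeck--Yau to Higgs-type operators. There is nothing ``formal'' to upgrade, and no need to pass to curves; the contradiction with stability is immediate on $X$. Conversely, if that blow-up step genuinely failed to give an honest subsheaf, restricting to curves and ``integrating over a pencil'' would not rescue it: there is no identity that transfers a curve-level slope inequality back to a surface-level one in the way you suggest. So either drop the Mehta--Ramanathan paragraph and carry out the Simpson argument cleanly (this is essentially the \'Alvarez-C\'onsul--Garc\'{\i}a-Prada proof), or switch to the heat-flow/functional framework and use the restriction exactly as the paper does, through Poincar\'e--Lelong and the lower bound on $D_\varphi$.
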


In this article, we give an alternative proof for the existence part of
 the above theorem in the case of smooth projective surfaces, stated
 below as Theorem \ref{ith:exist}, 
by using a Mehta--Ramanathan style theorem.

\begin{theorem}
Let $X$ be a smooth projective surface, and let $E$ be a holomorphic vector bundle on
 $X$. 
We take a holomorphic section $\varphi$ of $\text{End{(E)}} \otimes
 K_{X}$, 
where $K_{X}$ is the canonical bundle of $X$. 
We assume that $(E , \varphi)$ is poly-stable in the sense of Definition
 \ref{def:pstable} 
Then there exists a unique Hermitian metric $h$ of $E$   
 such that the equation $F_{h} + \Lambda  [ \varphi, \bar{\varphi}^{h}]  = 
 \frac{i \lambda (E)}{2} Id_{E} \, \omega$ is satisfied. 
\label{ith:exist}
\end{theorem}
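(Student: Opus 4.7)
The plan is to follow Donaldson's continuity-method approach to the Hermitian--Einstein problem, adapted to the pair $(E, \varphi)$, with the role of the Narasimhan--Seshadri theorem on curves replaced by Hitchin's theorem \cite{Hit}. Fix a smooth reference metric $h_{0}$ on $E$ and consider, for each $\epsilon > 0$, the perturbed equation
\[
 F_{h} + \Lambda [\varphi, \bar{\varphi}^{h}] + \epsilon \log(h h_{0}^{-1}) = \tfrac{i \lambda(E)}{2}\, Id_{E}\, \omega .
\]
Openness in $\epsilon$ is an application of the implicit function theorem: the linearisation in $s = \log(h h_{0}^{-1})$ is strictly elliptic, and the $\epsilon s$ zeroth--order term kills its kernel. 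Closedness should follow from introducing a Donaldson--type functional $\mathcal{M}(h, h_{0})$ augmented by a Higgs contribution essentially of the form $\int_{X} \text{tr}([\varphi, \bar{\varphi}^{h}] s)\, \omega^{2}$, and verifying its convexity along geodesics in the space of Hermitian metrics. This yields a smooth solution $h_{\epsilon}$ for every $\epsilon > 0$.

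Passing to the limit $\epsilon \to 0$, elliptic bootstrapping reduces the problem to a uniform $L^{\infty}$ bound on $s_{\epsilon} := \log(h_{\epsilon} h_{0}^{-1})$. I would argue by contradiction in the spirit of Uhlenbeck--Yau: if $M_{\epsilon} := \sup_{X} |s_{\epsilon}| \to \infty$, normalise $u_{\epsilon} = s_{\epsilon}/M_{\epsilon}$, extract a weak $L^{2}_{1}$-limit $u_{\infty} \not\equiv 0$, and use its spectral projections to produce a weakly holomorphic subsheaf $\mathcal{F} \subset E$. The crucial new point relative to the classical case is that the $[\varphi, \bar{\varphi}^{h}]$ term in the equation forces $\mathcal{F}$ to be $\varphi$-invariant in the sense of Definition \ref{def:stable}. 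Showing that $\mathcal{F}$ violates the slope inequality would then contradict poly-stability of $(E, \varphi)$.

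The Mehta--Ramanathan ingredient enters in this final verification. Restrict $(E, \varphi)$ to a generic smooth curve $C \in |mH|$ for $m \gg 0$: a Mehta--Ramanathan theorem for pairs should assert that stability of $(E, \varphi)$ on $X$ implies stability of $(E|_{C}, \varphi|_{C})$ on $C$, where $\varphi|_{C}$ is a section of $\text{End}(E|_{C}) \otimes K_{X}|_{C}$ and the adjunction $K_{X}|_{C} = K_{C} \otimes N^{*}_{C/X}$ relates the restricted equation to a twisted Hitchin equation. Hitchin's theorem on $C$ then produces solutions whose a priori estimates, transported back to $X$ through the family of curves in $|mH|$, give the desired $L^{\infty}$ bound on $s_{\epsilon}$. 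The two main obstacles I foresee are (i) establishing the Mehta--Ramanathan restriction theorem for $\varphi$-invariant subsheaves, in particular showing that $\varphi|_{C}$-invariant destabilisers on a generic $C$ can be lifted to $\varphi$-invariant destabilisers on $X$; and (ii) rigorously identifying the weak-limit subsheaf $\mathcal{F}$ as $\varphi$-invariant from the approximate equations satisfied by the $u_{\epsilon}$ — it is here that the analysis of the commutator term $[\varphi, \bar{\varphi}^{h}]$ is genuinely new compared with Donaldson's original treatment, and where the bulk of the work will go. Uniqueness of the Hermitian metric then follows from the convexity of $\mathcal{M}$ along geodesics together with the irreducibility built into poly-stability.
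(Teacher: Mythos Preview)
Your outline conflates two independent proof strategies. The continuity method with the $\epsilon$-perturbed equation, followed by an Uhlenbeck--Yau extraction of a $\varphi$-invariant destabilizing subsheaf from a blowing-up sequence $u_\epsilon$, is a self-contained argument --- essentially the one carried out in \cite{AG} --- and it needs \emph{no} Mehta--Ramanathan input: the subsheaf $\mathcal{F}$ lives on $X$, its slope is computed there, and poly-stability is contradicted directly. Your claim that Mehta--Ramanathan ``enters in this final verification'' is therefore misplaced, and the subsequent suggestion of ``transporting a priori estimates back to $X$ through the family of curves in $|mH|$'' to bound $s_\epsilon$ in $L^\infty$ does not correspond to any standard mechanism; one does not patch curve-wise Hitchin metrics into a metric on the surface.

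The paper follows the other route, Donaldson's, and this is where Mehta--Ramanathan is genuinely used. It runs the \emph{heat flow} $\partial_t h_t = -\bigl(i(\Lambda F_{h_t}+*[\varphi,\bar\varphi^{h_t}])-\tfrac{\lambda(E)}{2}\bigr)$ rather than a continuity method, and the crux is a \emph{lower bound for the Donaldson-type functional} $D_\varphi(h,k)_X$. This bound comes from a Poincar\'e--Lelong/Bott--Chern identity comparing $D_\varphi(h,k)_X$ with its restriction $D_\varphi(h,k)_D$ to a \emph{single} generic curve $D\in|mH|$, up to an error controlled by $\max_X|\mu_\varphi(h)|^2$. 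Mehta--Ramanathan for sheaves with operator (\cite{Si2}, Prop.~3.6) guarantees that $(E|_D,\varphi|_D)$ is semi-stable as an $L$-twisted Higgs pair with $L=K_D\otimes\mathcal{O}_D(-D)$, and the known correspondence for $L$-twisted Hitchin equations on curves (\cite{L}, \cite{AG}) then bounds $D_\varphi(\cdot,k)_D$ from below. With the functional bounded below along the flow, the moment map tends to zero in $C^0$; Uhlenbeck-type compactness and removable singularities for the Vafa--Witten equations (\cite{BM}) produce a limiting solution on a bundle $E'$, and a nonzero holomorphic map $E\to E'$ together with stability forces $E\cong E'$. No destabilizing-subsheaf contradiction is invoked. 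So either drop Mehta--Ramanathan and carry out the Uhlenbeck--Yau/Simpson argument cleanly (your obstacle (ii) is then the real work), or switch to the heat-flow picture and use the restriction-to-a-curve argument as above; the hybrid you describe is not a coherent strategy.
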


Our proof, given in the next section, also uses a Donaldson-type  
functional on the space of Hermitian metrics on $E$, 
which is a modification of that defined by Donaldson in \cite{D} for
solving the Hermitian--Einstein problem. 
As in the Hermitian--Einstein case, 
one main point is to obtain a lower
bound for the functional. 
To achieve this we use 
a Mehta--Ramanathan style argument, 
in other words, we reduce 
the problem to the corresponding lower dimensional
one; the Hitchin equation on compact Riemann surfaces in our case. 
This method was developed originally by 
Donaldson \cite{D} to prove the existence of a
 Hermitian--Einstein metric on a holomorphic vector bundle over a smooth
 projective surface under the assumption of stability.

We remark that 
a parallel argument to that described in this article 
should give an alternative proof of the existence of a
 solution under the assumption of stability 
 for the Donaldson--Thomas instanton equations described in 
\cite{tanaka2} on smooth projective
 threefolds; and more broadly that for the quiver vortex equation 
on higher dimensional 
 smooth projective varieties.

\paragraph{Acknowledgements.}
I would like to thank Kotaro Kawatani for useful conversations around the subject. 
I would also like to thank Oscar Garc\'{\i}a-Prada for informing me of
the results in \cite{AG} and \cite{BGM}. 
I am grateful to a referee for 
many useful comments and helpful suggestions.

\section{Existence of a solution under the assumption of stability}

In Section \ref{sec:func}, 
we introduce a functional on the space of Hermitian metrics of a
holomorphic vector bundle over a compact K\"{a}hler surface, 
and lay out some properties of the functional such as its
critical points and convexity. 
In Section \ref{sec:Hit}, we mention the Hitchin equation on a compact
Riemann surface and its generalization. 
We then give a proof of Theorem \ref{ith:exist} 
using a Mehta--Ramanathan style theorem in Section \ref{sec:exist}.

\subsection{Functional and its properties}
\label{sec:func}

Let $E$ be a holomorphic vector bundle over a compact K\"{a}hler
surface $X$. 
We denote by $\text{Herm}^{+} (E)$ the set of all $C^{\infty}$ Hermitian metrics
on $E$.  
Let $k ,h \in \text{Herm}^{+} (E)$. 
We connect them by a smooth curve $h_{t} \, (0 \leq t \leq 1)$ so that 
$k = h_{0}$ and $h = h_{1}$.     
We denote by $F_{h_{t}} = F (h_{t}) \in A^{1,1} (\text{End} (E) )$ 
the curvature of $h_{t}$. 
We put $v_{t} = h_{t} \partial_{t} h_{t} \in A^{0} (\text{End} (E))$. 
Let $\varphi$ be a holomorphic section of $\text{End} (E) \otimes K_{X}$, 
where $K_{X}$ is the canonical bundle of $X$. 
We define the following. 
\begin{gather*}
 Q_{1} (h,k) := \log ( \det (k^{-1} h)),  \\
 Q_2 (h , k) := \sqrt{-1} \int_{0}^{1} 
 \text{tr} \, ( v_{t} \cdot (F_{h_{t}} + \Lambda  [ \varphi ,
 \bar{\varphi}^{h_{t}}]  )) dt.
\end{gather*}
We then consider the following functional for pairs of Hermitian metrics
$h$ and $k$.    
\begin{equation*}
 D_{\varphi} (h,k) :=  \int_{X} Q_{2} (h ,k)
\wedge \omega 
 - \frac{\lambda (E)}{2} \int_{X} 
\, Q_{1} (h,k) dV_{g},  
\end{equation*}
where $\lambda (E) = 2 \pi  c_1
(E) \cdot [\omega]   / r [\omega]^2 $. 
This is a modification of the functional defined by Donaldson in
\cite{D} for solving the Hermitian--Einstein problem.

Firstly, we prove that $D_{\varphi} (h,k)$ does not
depend upon the choice of a curve joining $k$ and $h$. 
As in the case of the Hermitian--Einstein metrics, 
one can prove the following. 
\begin{proposition}
Let $h_{t}\, (a \leq t \leq b)$ be a differentiable curve in
 $\text{Herm}^{+} (E)$, 
and let $k$ be a fixed Hermitian metric of $E$. 
Then 
\begin{equation}
 \sqrt{-1} \int_{a}^{b} \text{tr}\, \left( v_t \cdot \left( F_{h_{t}} + 
\Lambda [ \varphi ,\bar{\varphi}^{h_{t}}] \right) \right) dt 
+ Q_2 (h_{a} ,k) 
-Q_2 (h_{b} , k)  
\label{eq:lem3.2}
\end{equation}
lies in 
$\partial A^{0,1} + \bar{\partial} A^{1,0}$. 
\label{prop:df1}
\end{proposition}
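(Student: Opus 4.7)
The plan is to prove that the expression in (\ref{eq:lem3.2}) depends on the curve $h_t$ only through its endpoints $h_a$ and $h_b$, modulo $\partial A^{0,1}+\bar\partial A^{1,0}$. Writing
$$I[h_t]:=\sqrt{-1}\int_a^b\text{tr}\bigl(v_t\cdot(F_{h_t}+\Lambda[\varphi,\bar\varphi^{h_t}])\bigr)\,dt,$$
one sees that $Q_2(h,k)$ is exactly $I$ evaluated on a chosen curve from $k$ to $h$, and that $I$ is additive under concatenation of curves; hence the expression in (\ref{eq:lem3.2}) equals $I$ along the closed loop obtained by concatenating a curve from $k$ to $h_a$, the given curve $h_t$, and the reverse of a curve from $k$ to $h_b$. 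It therefore suffices to prove fixed-endpoint path-independence of $I$ modulo $\partial A^{0,1}+\bar\partial A^{1,0}$.

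For this, I would take two curves $h_t^{(0)},h_t^{(1)}$ from $h_a$ to $h_b$, interpolate them by a two-parameter family $h_{s,t}$ with $h_{s,a}=h_a$ and $h_{s,b}=h_b$ for all $s\in[0,1]$, and show $\partial_s I[h_{s,\cdot}]\in\partial A^{0,1}+\bar\partial A^{1,0}$. Setting $u:=h^{-1}\partial_s h$, the three main ingredients are the commutation identity $\partial_s v-\partial_t u=[v,u]$, coming from $\partial_s\partial_t h=\partial_t\partial_s h$; the Donaldson--Kobayashi variation formula $\partial_s F_h=\bar\partial(\partial_h u)$ for the Chern curvature, where $\partial_h$ is the $(1,0)$-part of the Chern connection on $\text{End}(E)$; and the variation $\partial_s\bar\varphi^h=[\bar\varphi^h,u]$ of the metric-adjoint, obtained by differentiating its defining formula. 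Note in particular that $u\equiv 0$ at $t=a,b$ because the endpoints of the family are fixed.

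Differentiating the integrand of $I$ in $s$, the curvature term contributes exactly as in Donaldson's original argument for the Hermitian--Einstein functional: integration by parts in $t$ together with the variation formula for $F_h$ and a Stokes-type rewriting produce a $(1,1)$-form in $\partial A^{0,1}+\bar\partial A^{1,0}$ plus a $\partial_t$-boundary term, the latter vanishing by the boundary condition on $u$. For the Higgs-type term a direct calculation using the Jacobi identity and the cyclicity of the trace gives
$$\partial_s\,\text{tr}\bigl(v\Lambda[\varphi,\bar\varphi^h]\bigr)=\partial_t\,\text{tr}\bigl(u\Lambda[\varphi,\bar\varphi^h]\bigr),$$
a pure $\partial_t$-derivative that integrates away.

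The main obstacle is the algebraic collapse of the Higgs-type bulk contribution: after expanding via Jacobi one is left with two surviving double-commutator traces $\text{tr}(u[\varphi,[\bar\varphi^h,v]])$ and $\text{tr}(v[\bar\varphi^h,[\varphi,u]])$ appearing with opposite signs. Applying the identity $\text{tr}(A[B,C])=\text{tr}([A,B]C)$ to each, together with the cyclic property $\text{tr}(AB)=\text{tr}(BA)$, shows that both traces equal $-\text{tr}([\varphi,u][\bar\varphi^h,v])$ and so cancel. Once this bookkeeping is verified, the relation $\partial_s I\in\partial A^{0,1}+\bar\partial A^{1,0}$ is established, and integrating in $s$ from $0$ to $1$ yields fixed-endpoint path-independence of $I$, from which the proposition follows.
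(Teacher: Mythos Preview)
Your proposal is correct and follows essentially the same approach as the paper. The paper sets up a rectangle $\Delta=\{(t,s):a\le t\le b,\,0\le s\le1\}$ with $h(t,0)=k$, $h(t,1)=h_t$, and the vertical sides equal to the curves defining $Q_2(h_a,k)$ and $Q_2(h_b,k)$; applying Stokes to the $1$-form $\Phi=\sqrt{-1}\,\text{tr}\bigl(h^{-1}\tilde d h\,(F+\Lambda[\varphi,\bar\varphi^h])\bigr)$ identifies (\ref{eq:lem3.2}) with $\int_\Delta\tilde d\Phi$, and the computation of $\tilde d\Phi$ is exactly your $\partial_s$-variation of the integrand---the curvature part reduces to the Donaldson--Kobayashi term $-\sqrt{-1}\,\text{tr}(vD'\bar\partial w+w\bar\partial D'v)$, and the Higgs part vanishes by the same Jacobi/cyclicity cancellation you describe. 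Your reformulation via concatenation into a closed loop and homotopy to the constant path is a cosmetic repackaging of the same Stokes argument.
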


\begin{proof}
Our  proof goes in a similar way to that by Kobayashi \cite[Chap.~VI,
 Lem.~3.6]{Ko} 
for the Hermitian--Einstein metrics   
except that we deal with the extra field $\varphi$. 
Let $\Delta$ be the domain in $\R^2$ defined by 
$ \Delta := \{ (t ,s) \, : a \leq t \leq b \, , \, 0 \leq s \leq 1 \}$, 
$h : \Delta \to \text{Herm}^{+} (E)$ a smooth map with $h (t , 0) = k , 
h(t , 1) =h_{t}$ for $a \leq t \leq b$, 
and $h(a,s)$ and $h(b,s)$ are the line segments from $k$ to $h_{a}$ and 
$k$ to $h_{b}$ respectively. 
Set $v = h^{-1} \partial_{s} h , w = h^{-1} \partial_{t} h, 
F = \bar{\partial} (h^{-1} \partial h)$, and 
$\Phi = \sqrt{-1} \text{tr} \left( 
h^{-1} \tilde{d} h ( F + \Lambda [\varphi , \bar{\varphi}^{h}]) \right)$, 
where $\tilde{d} = \frac{\partial}{\partial s}  ds +
 \frac{\partial}{\partial t} dt$, the exterior derivative on $\Delta$. 
We then use the Stokes formula for the 1-form $\Phi$, namely, 
\begin{equation}
\int_{\Delta} \tilde{d} \Phi 
 = \int_{\partial \Delta} \Phi .
\label{eq:stokes}
\end{equation}
The right-hand-side of \eqref{eq:stokes} becomes  
$$ \int_{\partial \Delta} \Phi 
= \sqrt{-1} \int_{a}^{b} \text{tr} 
\left( v_{t} \cdot  ( F_{h_{t}} + 
\Lambda [ \varphi ,\bar{\varphi}^{h_t}]  )\right)  dt 
+ Q_2 (h_{a} ,k) 
-Q_2 (h_{b} , k) . 
$$  
Therefore we need to prove 
that $\tilde{d} \Phi \in \partial A^{0,1} + \bar{\partial} A^{1 ,0}$. 
Put $M= F + \Lambda [ \varphi , \bar{\varphi}^{h}]$. 
From the definition of $\Phi$, we get 
$$ \tilde{d} \Phi 
= \sqrt{-1} \text{tr} 
\left(  ( \partial_{s} v - \partial_{t} w ) M 
 - w \partial_{t} M + v \partial_{s} M \right) ds \wedge dt . $$
Furthermore, some calculations similar to \cite[pp.199--200]{Ko} show that 
$ \partial_{s} v  = - w v + h^{-1} \partial_{s} \partial_{t} h ,
\partial_{t} w = - v w + h^{-1} \partial_{t} \partial_{s} h $, 
$\partial_{t} M = \bar{\partial} D' v 
+ \Lambda [ \varphi, \partial_{t} \bar{\varphi}^{h}]$, 
$\partial_{s} M = \bar{\partial} D' w 
+ \Lambda [ \varphi , \partial_{s} \bar{\varphi}^{h}]$, 
where $D = D' + D'' (= D' + \bar{\partial})$ is the exterior covariant
 differentiation of the Hermitian connection defined by $h$.  
Using these, we get 
\begin{equation}
\begin{split}
\tilde{d} \Phi 
 &= \sqrt{-1} \text{tr} \left( 
 ( v w - wv ) F - w \bar{\partial} D' v + 
v \bar{\partial} D' w \right) ds \wedge dt \\
 &+ \sqrt{-1} \text{tr} \, \left( \Lambda 
 ( v [\varphi , \partial_{s} \bar{\varphi}^{h} ] 
     - w [ \varphi , \partial_{t} \bar{\varphi}^{h}] 
   + (vw -w v) [ \varphi , \bar{\varphi}^{h}] ) \right) ds \wedge dt . \\
\end{split}
\label{eq:dphi}
\end{equation}
One can easily check that the second term 
of \eqref{eq:dphi} vanishes because of  $\partial_{s} \bar{\varphi}^{h} 
= [ \bar{\varphi}^{h} , w ]$, $\partial_{t} \bar{\varphi}^{h} = [ \bar{\varphi}^{h} ,v]$,  
and the Jacobi identity. 
On the other hand, the first term of \eqref{eq:dphi}, 
which does not involve the extra field $\varphi$,  
 is the same term in the Hermitian--Einstein case as in \cite{Ko}, 
and it becomes 
$$  - \sqrt{-1} \text{tr} \, 
\left( v D' \bar{\partial} w + w \bar{\partial} D' v \right) ds \wedge dt .$$
Hence, defining the $(0,1)$-form $\alpha := \sqrt{-1} \text{tr} \, ( v \bar{\partial} w)$, 
we get 
$$ \tilde{d} \Phi = - 
\left( \partial \alpha + \bar{\partial} \bar{\alpha} 
+ \sqrt{-1} \bar{\partial } \partial \text{tr} \, (vw )\right) ds \wedge dt .$$ 
Thus, $\tilde{d} \Phi \in \partial  A^{0,1} + \bar{\partial} A^{1,0}$. 
\end{proof}

From Proposition \ref{prop:df1}, 
we deduce the following. 
\begin{Corollary}
Let $h_{t} \, (a \leq t \leq b)$ be a piecewise differentiable closed 
curve in $\text{Herm}^{+} (E)$ (namely, $h_{a} = h_{b}$). 
Put $v_{t} = h_{t}^{-1} \partial_{t} h_{t}$.  
Then 
\begin{equation*}
 \sqrt{-1} \int_{a}^{b} \text{tr}\, \left( v_t \cdot \left( F_{h_{t}} + 
\Lambda [ \varphi ,\bar{\varphi}^{h_{t}}] \right) \right) dt 
\end{equation*} 
lies in $\partial A^{0,1} + \bar{\partial} A^{1,0}$.  
\label{lem2}
\end{Corollary}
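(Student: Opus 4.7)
The plan is to derive the corollary as a direct consequence of Proposition \ref{prop:df1}, using the fact that the curve is closed to make the boundary contributions $Q_2(h_a,k) - Q_2(h_b,k)$ cancel. Since the statement allows $h_t$ to be only piecewise differentiable, the proposition cannot be applied on the whole interval at once, so I would first chop the curve into smooth pieces, apply the proposition on each piece, and then sum.

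More concretely, I would pick a partition $a = t_{0} < t_{1} < \cdots < t_{N} = b$ such that $h_{t}$ is differentiable on each subinterval $[t_{i-1}, t_{i}]$, and fix an arbitrary auxiliary reference metric $k \in \text{Herm}^{+}(E)$ (taking $k = h_{a}$ is convenient but unnecessary). Applying Proposition \ref{prop:df1} to the restriction of $h_{t}$ to $[t_{i-1}, t_{i}]$ yields, for each $i$, that
\begin{equation*}
\sqrt{-1} \int_{t_{i-1}}^{t_{i}} \text{tr}\bigl( v_{t} \cdot ( F_{h_{t}} + \Lambda [ \varphi , \bar{\varphi}^{h_{t}} ] ) \bigr)\, dt + Q_{2}(h_{t_{i-1}}, k) - Q_{2}(h_{t_{i}}, k)
\end{equation*}
lies in $\partial A^{0,1} + \bar{\partial} A^{1,0}$.

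Summing these $N$ expressions, the $Q_{2}$-terms telescope to $Q_{2}(h_{a}, k) - Q_{2}(h_{b}, k)$, which vanishes because $h_{a} = h_{b}$. Since $\partial A^{0,1} + \bar{\partial} A^{1,0}$ is a linear subspace of forms and hence closed under finite sums, the remaining integral
\begin{equation*}
\sqrt{-1} \int_{a}^{b} \text{tr}\bigl( v_{t} \cdot ( F_{h_{t}} + \Lambda [ \varphi , \bar{\varphi}^{h_{t}} ] ) \bigr)\, dt
\end{equation*}
also lies in $\partial A^{0,1} + \bar{\partial} A^{1,0}$, proving the corollary.

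There is no real obstacle here; the corollary is essentially a bookkeeping exercise using the proposition together with the closedness of the loop. The only minor points to verify are that the partition can be chosen compatibly with the piecewise-smooth structure of $h_{t}$ and that the choice of reference metric $k$ is immaterial (the final expression does not involve $k$ at all, as the telescoping removes it).
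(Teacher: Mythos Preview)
Your proof is correct and is exactly the argument the paper has in mind: the paper does not spell out a proof of this corollary at all, merely stating that it is deduced from Proposition~\ref{prop:df1}, and your partition-then-telescope argument is the natural way to make that deduction precise for a piecewise differentiable closed curve.
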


Hence we obtain the following.  
\begin{proposition}
$D_{\varphi} (h , k)$ does not depend on the choice of a curve joining $k$ to $h$. 
\end{proposition}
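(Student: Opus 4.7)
The plan is to reduce the statement directly to Corollary \ref{lem2} followed by a Stokes-type vanishing argument. First I would split $D_\varphi(h,k)$ into its two pieces. The term $-\frac{\lambda(E)}{2}\int_X Q_1(h,k)\, dV_g$ involves only $\log\det(k^{-1}h)$, which depends solely on the endpoints $k$ and $h$, so nothing needs to be checked there. The entire question concerns the $Q_2$-term $\int_X Q_2(h,k)\wedge\omega$.

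Given two piecewise smooth curves $h_t^{(1)}$ and $h_t^{(2)}$ joining $k$ to $h$ in $\text{Herm}^+(E)$, I would concatenate $h_t^{(1)}$ with the reverse of $h_t^{(2)}$ to form a piecewise differentiable closed loop based at $k$. By the usual change-of-variables argument, the integral $\sqrt{-1}\int \text{tr}(v_t \cdot (F_{h_t}+\Lambda[\varphi,\bar\varphi^{h_t}]))\, dt$ along the reversed piece is the negative of the corresponding integral along $h_t^{(2)}$, so the integral over the full loop reproduces exactly the difference of the two $Q_2(h,k)$'s computed along curves $1$ and $2$. Corollary \ref{lem2} then places this difference in $\partial A^{0,1}+\bar{\partial}A^{1,0}$.

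It remains to verify that any element $\partial\alpha + \bar{\partial}\beta$ with $\alpha\in A^{0,1}$ and $\beta\in A^{1,0}$ integrates to zero against $\omega$. Since $X$ has complex dimension two, all forms of bidegree $(1,3)$ or $(3,1)$ vanish identically; in particular $\bar{\partial}\alpha\wedge\omega=0$ and $\partial\beta\wedge\omega=0$. Combined with $d\omega=0$, this yields $\partial\alpha\wedge\omega = d(\alpha\wedge\omega)$ and $\bar{\partial}\beta\wedge\omega = d(\beta\wedge\omega)$, so Stokes' theorem on the compact manifold $X$ annihilates both integrals. Hence the $Q_2$-piece is curve-independent as well, and the proposition follows.

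I do not expect a serious obstacle: the analytic content has been packaged into Proposition \ref{prop:df1} and Corollary \ref{lem2}, and what remains is the routine bidegree-and-Stokes step above. The only mildly delicate bookkeeping is to confirm that the loop-based-at-$k$ construction really reproduces the signed difference of the two values of $Q_2$, which comes down to a straightforward reparametrisation.
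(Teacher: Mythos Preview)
Your proposal is correct and follows exactly the route the paper intends: the paper simply writes ``Hence we obtain the following'' after Corollary~\ref{lem2}, and you have supplied the standard details---the $Q_1$-term is manifestly endpoint-dependent, the difference of $Q_2$'s along two curves equals the loop integral, and elements of $\partial A^{0,1}+\bar\partial A^{1,0}$ wedge with $\omega$ to exact forms by bidegree reasons and $d\omega=0$. There is nothing to add.
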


We next fix a Hermitian metric $k$ on $E$, and define a functional 
$D_{\varphi} : \text{Herm}^{+} (E) \to \R$ by 
$D_{\varphi} (h) :=  D_{\varphi} (k ,h)$ for $h \in \text{Herm}^{+} (E)$. 
Following \cite[Chap.VI \S 3]{Ko}, one can prove the following two
propositions. 
The first says that the critical points of the functional are solutions
of the Vafa--Witten equations.

\begin{proposition}
Let $k$ be a fixed Hermitian metrics on $E$. 
Then $h$ is a critical point of $D_{\varphi} (\cdot) := D_{\varphi} (k, \cdot)$ if and only if 
$h$ satisfies $F_{h} + \Lambda [\varphi , \bar{\varphi}^{h}] = 
\sqrt{-1} \frac{\lambda (E)}{2} Id_{E} \omega$. 
\label{prop:crit} 
\end{proposition}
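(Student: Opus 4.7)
The plan is to carry out a standard first-variation computation, following the pattern of the Hermitian--Einstein case in \cite[Chap.~VI, \S 3]{Ko}. Fix a smooth one-parameter family $h_s \in \text{Herm}^+ (E)$ with $h_0 = h$ and $v := h^{-1}\partial_s h_s|_{s=0}$, which is Hermitian with respect to $h$; the aim is to compute $\frac{d}{ds}\big|_{s=0} D_\varphi(h_s)$ and to show that its vanishing for every such $v$ is equivalent to the claimed Euler--Lagrange equation.

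By Proposition \ref{prop:df1}, $D_\varphi (k,\cdot)$ is independent of the path used to define $Q_2$, so one is free to choose the curve from $k$ to $h_s$ to consist of a fixed curve from $k$ to $h$ concatenated with the tail $\{h_t\}_{0\leq t\leq s}$. With this choice, the fundamental theorem of calculus applied to the defining integral of $Q_2$ gives
\begin{equation*}
\frac{d}{ds}\bigg|_{s=0}\int_X Q_2(h_s,k)\wedge\omega \;=\; \sqrt{-1}\int_X \text{tr}\bigl(v\cdot(F_h+\Lambda[\varphi,\bar\varphi^h])\bigr)\wedge\omega,
\end{equation*}
and the identity $\frac{d}{ds}\log\det M_s = \text{tr}(M_s^{-1}\partial_s M_s)$, applied to $M_s = k^{-1}h_s$, yields $\frac{d}{ds}\big|_{s=0} Q_1(h_s,k) = \text{tr}(v)$. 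Combining these and invoking the K\"ahler identity $\alpha\wedge\omega = (\Lambda\alpha)\,dV_g$ on a complex surface (together with the fact that $\Lambda$ commutes with multiplication by the $0$-form $v$), the full first variation reduces to a single integral $\int_X\text{tr}(v\cdot M)\,dV_g$ for an endomorphism $M$ built from $\Lambda F_h$, $\Lambda^2[\varphi,\bar\varphi^h]$, and a multiple of $Id_E$.

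To finish, I would invoke the non-degeneracy of the trace pairing on Hermitian endomorphisms. Since $v$ can be any such endomorphism, and since both $\sqrt{-1}F_h$ and $\sqrt{-1}\Lambda[\varphi,\bar\varphi^h]$ are Hermitian (the latter because $[\varphi,\bar\varphi^h]^{*h}=-[\varphi,\bar\varphi^h]$), the vanishing of the first variation for all $v$ forces $M=0$ pointwise, which after rearranging constants is exactly the stated equation $F_h+\Lambda[\varphi,\bar\varphi^h]=\sqrt{-1}\frac{\lambda(E)}{2}Id_E\,\omega$. I do not anticipate any substantial obstacle: the only feature beyond the Hermitian--Einstein case is the $\varphi$-dependent term in $Q_2$, but by the fundamental theorem of calculus it contributes only through its value at the basepoint, so once path independence is in hand the computation is essentially bookkeeping.
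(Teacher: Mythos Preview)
Your proposal is correct and follows essentially the same route as the paper: the paper differentiates \eqref{eq:lem3.2} in $t$ (which is exactly your concatenated-path/FTC argument in disguise) to obtain \eqref{eq:fd}, and then concludes with the implicit nondegeneracy argument you make explicit. The only cosmetic difference is that you spell out the Hermitian trace-pairing step, whereas the paper simply writes ``Thus the assertion holds.''
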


\begin{proof}
Let $h_{t} \, (a \leq t \leq b)$ be a differentiable curve in
 $\text{Herm}^{+} (E)$, which connects $h$ and $k$. 
Then, 
differentiating \eqref{eq:lem3.2} with respect to $t$, 
we get
$$ \frac{d}{dt} Q_2 (h_{t} ,k) 
 = \sqrt{-1} \text{tr} (v_{t} \cdot (F_{h_{t}} + \Lambda  
 [\varphi, \bar{\varphi}^{h_{t}} ] ))  $$
up to $\partial A^{0,1} + \bar{\partial} A^{1,0}$. 
In addition, we have $\partial_{t} Q_{1} (h_{t} , k) = \text{tr} (v_{t})$. 
Hence we obtain 
\begin{equation}
\frac{d}{dt} D_{\varphi} (h_t) =  \sqrt{-1} \int_{X} \text{tr} \,  
( v_t \cdot\mu_{\varphi} (h_t)) , 
\label{eq:fd} 
\end{equation}
where 
$\mu_{\varphi} (h_t) 
:= F_{h_t} \wedge \omega + [ \varphi , \bar{\varphi}^{h_t}] 
- \sqrt{-1} \frac{\lambda (E)}{2} Id_{E} 
\omega^2$. 
Thus the assertion holds. 
\end{proof}

The next proposition says that the functional $D_{\varphi} ( \cdot )$ is
convex. 
\begin{proposition}
Let $k$ be a fixed Hermitian metric on $E$, and let $\tilde{h}$ be a critical point
 of $D_{ \varphi} ( \cdot) = D_{ \varphi} (k , \cdot)$. 
Then $D_{ \varphi} (\cdot)$ attains an absolute minimum at
 $\tilde{h}$. 
\label{prop:convex} 
\end{proposition}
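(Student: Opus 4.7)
The plan is to adapt the Donaldson--Kobayashi strategy used for Hermitian--Einstein metrics in \cite[Chap.~VI]{Ko} to the presence of the extra field $\varphi$: I would show that $D_\varphi(\cdot)$ is convex along geodesics in $\text{Herm}^{+}(E)$ and conclude minimality from the critical point condition at $\tilde h$.

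Given an arbitrary $h \in \text{Herm}^{+}(E)$, I would first join $\tilde h$ to $h$ by the geodesic $h_t := \tilde h \exp(ts)$, where $s := \log(\tilde h^{-1} h)$ is $\tilde h$-self-adjoint. Since $s$ commutes with $e^{ts}$, it remains $h_t$-self-adjoint for every $t \in [0,1]$, and $v_t = h_t^{-1} \partial_t h_t = s$ is constant along the curve. By formula \eqref{eq:fd} established in the proof of Proposition \ref{prop:crit},
\begin{equation*}
\frac{d}{dt} D_\varphi(h_t) = \sqrt{-1} \int_X \text{tr}(s \cdot \mu_\varphi(h_t)),
\end{equation*}
which vanishes at $t = 0$ because $\tilde h$ is a critical point.

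The heart of the argument is to differentiate once more and prove $\frac{d^2}{dt^2} D_\varphi(h_t) \geq 0$ for all $t$. Using the standard variational formulas $\partial_t F_{h_t} = \bar\partial D'_{h_t} s$ and $\partial_t \bar\varphi^{h_t} = [\bar\varphi^{h_t}, s]$, two contributions appear. The first, coming from the curvature, is exactly the integrand treated in the Hermitian--Einstein setting: integration by parts recasts $\text{tr}(s \cdot \bar\partial D'_{h_t} s) \wedge \omega$ as a non-negative multiple of $|D''_{h_t} s|^2_{h_t}$ times the volume form. The second involves the commutator with $\varphi$; cyclicity of the trace yields $\text{tr}(s \cdot [\varphi, [\bar\varphi^{h_t}, s]]) = \text{tr}([s, \varphi] \cdot [\bar\varphi^{h_t}, s])$, and since $s$ is $h_t$-self-adjoint a direct check identifies $[\bar\varphi^{h_t}, s]$ as the $h_t$-adjoint of $[s, \varphi]$, so that the integrand equals the pointwise norm squared $|[s, \varphi]|^2_{h_t}$ (a $(2,2)$-form on $X$).

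The main obstacle is precisely this last step: the sign bookkeeping, the twist by $K_X$, and the careful verification that taking the $h_t$-adjoint of $[s, \varphi]$ reproduces $[\bar\varphi^{h_t}, s]$ are what make the $\varphi$-contribution manifestly non-negative, and without them there is no a priori convexity. Once both contributions are confirmed non-negative, $D_\varphi(h_t)$ is convex on $[0,1]$ with vanishing derivative at $t = 0$, so $D_\varphi(h) = D_\varphi(h_1) \geq D_\varphi(\tilde h)$; since $h$ was arbitrary, $\tilde h$ attains an absolute minimum, as claimed.
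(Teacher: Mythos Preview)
Your proposal is correct and follows essentially the same route as the paper: compute the second variation of $D_\varphi$ along geodesics in $\text{Herm}^{+}(E)$, use $\partial_t\bar\varphi^{h_t}=[\bar\varphi^{h_t},v_t]$ together with the standard curvature variation to see that both contributions are non-negative, and conclude from the vanishing first variation at $\tilde h$. The only cosmetic difference is that the paper first records the second variation at $t=0$ to note local minimality before passing to geodesics, whereas you go straight to the geodesic argument; and the paper writes the two terms as $\|D'v_t\|_{L^2}^2$ and $\|[\bar\varphi^{h_t},v_t]\|_{L^2}^2$, which are the same quantities you identify via the $h_t$-adjoint.
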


\begin{proof}
Let $h_{t} \, (0 \leq t \leq 1)$ be a differential curve with $h_{0} = \tilde{h}$. 
Differentiating \eqref{eq:fd}, we get 
\begin{equation*}
\frac{d^2}{dt^2} D_{ \varphi} (h_t) 
= \frac{d}{dt} \sqrt{-1} \int_{X} \text{tr}\, (v_t \cdot \mu_{ \varphi} (h_{t})) 
= \sqrt{-1} \int_{X} \left( (\partial_{t} v_t) \cdot \mu_{\varphi} + v_t \cdot
 \partial_{t} \mu_{\varphi} \right). 
\end{equation*}
Furthermore, since 
$\partial_{t} \bar{\varphi}^{h} = [ \bar{\varphi}^{h} , v]$, 
we obtain 
\begin{equation}
\partial_{t} \mu_{\varphi} = \bar{\partial} D' v\wedge \omega 
+ [ \varphi , [ \bar{\varphi}^{h_t} , v]]. 
\label{eq:dtmu}
\end{equation} 
As $h_{0}$ is a critical point of $D_{\varphi} ( \cdot)$, we get 
\begin{equation*}
\begin{split}
\left. 
\frac{d^2}{dt^2}
 D_{\varphi} \left( h_t  \right) \right|_{t=0} 
&= \left. \sqrt{-1}  \int_{X} \text{tr}\,  \left(  v_{t} \cdot \left( 
\bar{\partial} D' v_t \wedge \omega^2 
+ [ \varphi , [ \bar{\varphi}^{h_t} , v_t ]] \right) \right)  \right|_{t=0} \\
&= \left. || D' v_t ||^{2}_{L^2} \right|_{t=0}  
\left. + || [ \bar{\varphi}^{h_t}, v_{t}] ||^{2}_{L^2} 
\right|_{t=0} . \\
\end{split}
\end{equation*}
Hence,  $h_{0}$ is at least a local minimum of $D_{\varphi}$. 
We then consider an arbitrary element in $\text{Herm}^{+} (E)$ and join
 it to $h_{0}$ by a geodesic $h_{t}$. 
Since $\partial_{t} v_{t} = 0$ if $h_{t}$ is a geodesic (see \cite[p.~204]{Ko}), 
from the same computation above, 
we obtain  
\begin{equation}
\frac{d^2}{dt^2} 
 D_{\varphi} \left( h_t  \right) 
=  || D' v_t ||^{2}_{L^2}  
 + || [ \bar{\varphi}^{h_t}, v_{t}] ||^{2}_{L^2} . 
\label{eq:dsd}
\end{equation}
This implies Proposition \ref{prop:convex}. 
\end{proof}

As in the case of the Hermitian--Einstein problem \cite[Prop.6
(iii)]{D}, 
the functional $D_{\varphi} (h, k)$ is the integration of Bott--Chern
forms as stated below. 
This can be proved for example by a similar argument to that by
Bradlow--Gomez \cite[Appendix]{BG} for the Higgs bundle case, 
so we here omit the details.

\begin{proposition}
Let $h, h' \in \text{Herm}^{+} (E)$. 
Then 
\begin{equation*}
\begin{split}
\sqrt{-1} \partial \bar{\partial} 
 &\left( Q_{2} ( h , h') 
  -\frac{\lambda (E)}{2} Q_{1} (h ,h') \wedge \omega \right) \\
& \quad = - \frac{1}{2} 
\text{tr} \, \left( \left( 
\sqrt{-1} (F_{h} +  \Lambda [ \varphi , \bar{\varphi}^{h}] )
  - \frac{\lambda (E)}{2} Id_{E} \, \omega \right)^2 \right) \\
& \qquad \qquad 
 + \frac{1}{2} \text{tr}\, 
 \left( \left( \sqrt{-1} ( F_{h'} +  \Lambda 
 [\varphi , \bar{\varphi}^{h'} ] ) 
 - \frac{\lambda (E)}{2} Id_{E} \, \omega \right)^2 \right). 
\end{split}
\end{equation*} 
\label{prop:sec}
\end{proposition}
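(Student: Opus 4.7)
The identity is a Bott--Chern-type secondary formula, and I would prove it by exhibiting both sides as integrals over $t$ of a common pointwise derivative along a smooth path $h_{t}$ $(0 \leq t \leq 1)$ in $\text{Herm}^{+}(E)$ connecting $h' = h_{0}$ to $h = h_{1}$. To set up, let $v_{t} = h_{t}^{-1}\partial_{t} h_{t}$ and abbreviate
\[
 M(h_{t}) := \sqrt{-1}\bigl( F_{h_{t}} + \Lambda[\varphi, \bar{\varphi}^{h_{t}}] \bigr) - \frac{\lambda(E)}{2}\, Id_{E}\, \omega,
\]
so that the right-hand side of the proposition equals $\tfrac{1}{2}\text{tr}\bigl(M(h_{0})^{2}\bigr) - \tfrac{1}{2}\text{tr}\bigl(M(h_{1})^{2}\bigr) = -\int_{0}^{1}\tfrac{d}{dt}\bigl[\tfrac{1}{2}\text{tr}(M(h_{t})^{2})\bigr]\, dt$. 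On the other hand, directly from the definitions of $Q_{1}$ and $Q_{2}$,
\[
 \frac{d}{dt}\Bigl( Q_{2}(h_{t},h') - \tfrac{\lambda(E)}{2}\, Q_{1}(h_{t},h')\wedge\omega \Bigr) = \text{tr}\bigl(v_{t}\, M(h_{t})\bigr),
\]
so integrating over $[0,1]$ and using $Q_{1}(h',h') = Q_{2}(h',h') = 0$ reduces the proposition to the pointwise identity
\[
 \sqrt{-1}\,\partial\bar{\partial}\, \text{tr}\bigl( v_{t}\, M(h_{t}) \bigr) = -\frac{d}{dt}\Bigl( \tfrac{1}{2}\text{tr}\bigl( M(h_{t})^{2} \bigr) \Bigr).
\]

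To verify this pointwise identity, I would evaluate the right-hand side by the formulas $\partial_{t} F_{h_{t}} = \bar{\partial} D'_{t} v_{t}$ and $\partial_{t} \bar{\varphi}^{h_{t}} = [\bar{\varphi}^{h_{t}}, v_{t}]$ already used in the proof of Proposition \ref{prop:df1}, which give
\[
 \dot{M}(h_{t}) = \sqrt{-1}\,\bar{\partial} D'_{t} v_{t} + \sqrt{-1}\,\Lambda\bigl[\varphi,\, [\bar{\varphi}^{h_{t}}, v_{t}]\bigr],
\]
and hence $\tfrac{d}{dt}\bigl(\tfrac12 \text{tr}(M^{2})\bigr) = \text{tr}(M\dot{M})$. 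Expanding $\sqrt{-1}\partial\bar{\partial}\,\text{tr}(v_{t} M(h_{t}))$ by the Leibniz rule and matching it against $-\text{tr}(M\dot M)$ term by term, the contribution free of $\varphi$ is precisely the secondary Bott--Chern computation performed by Donaldson in \cite[Prop.~6(iii)]{D} for the Hermitian--Einstein functional. The remaining contribution involves only the $\varphi$-dependent pieces $\sqrt{-1}\Lambda[\varphi,\bar{\varphi}^{h_{t}}]$ of $M$ together with the $\sqrt{-1}\Lambda[\varphi,[\bar{\varphi}^{h_{t}},v_{t}]]$ piece of $\dot M$; these cross terms cancel via the Leibniz rule, the K\"ahler identities, and the Jacobi identity, in the same way as the second term of \eqref{eq:dphi} vanishes in the proof of Proposition \ref{prop:df1}.

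I expect the main technical obstacle to be the careful bookkeeping of how $\sqrt{-1}\partial\bar{\partial}$ acts on the $(1,1)$-valued quantity $\Lambda[\varphi,\bar{\varphi}^{h_{t}}]$ and how the resulting $(2,2)$-form pieces regroup to match $\text{tr}(M\dot{M})$, since $M$ itself depends nonlinearly on $h$ through both $F_{h}$ and $[\varphi, \bar{\varphi}^{h}]$. This calculation is the twisted-section analogue of the one worked out in Bradlow--G\'omez \cite[Appendix]{BG} for Higgs bundles, and the same argument applies here with the cotangent bundle $\Omega^{1}_{X}$ replaced by the canonical bundle $K_{X}$; the details are omitted as stated in the paper.
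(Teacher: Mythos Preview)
Your proposal is correct and takes essentially the same approach as the paper: the paper itself gives no proof but simply remarks that the identity follows from the Bott--Chern argument of Donaldson \cite[Prop.~6(iii)]{D} together with the Higgs-bundle computation of Bradlow--G\'omez \cite[Appendix]{BG}, and your sketch unpacks exactly this strategy (differentiate along a path, reduce to a pointwise identity, match the $\varphi$-free part with Donaldson and handle the $\varphi$-terms via the Jacobi identity as in Proposition~\ref{prop:df1}). If anything, you supply more detail than the paper does.
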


\subsection{The Hitchin equation on compact Riemann surfaces}
\label{sec:Hit}

In this section, we briefly describe the Hitchin equation \cite{Hit} on
compact Riemann surfaces and its generalization.

\paragraph{The equations.} 
Let $\Sigma$ be a compact Riemann surface, and let $E$ a Hermitian vector
bundle of rank $r$ over $\Sigma$. 
We consider the following equations for a pair $(A , \Phi) \in \mathcal{A}_{E}
\times \Omega^{1,0} (\Sigma , \text{End}\, (E))$, 
where $\mathcal{A}_{E}$ is the set of all connections on
$E$.  
\begin{gather*}
 \bar{\partial}_{A} \Phi = 0 , \quad 
F_{A}  +  [ \Phi , 
 \bar{\Phi}] = \sqrt{-1} \frac{\lambda (E)}{2} Id_{E} \, \omega, 
\end{gather*}
where $F_{A}$ is the curvature of $A$, and 
$\lambda (E) := 2 \pi c_{1} (E) / r [ \omega]$. 
The above equations are called the {\it Hitchin equations}.

\paragraph{Stability.} 
For a pair $(E, \Phi)$ consisting of a holomorphic vector bundle $E$ over a
compact Riemann surface $\Sigma$ and a holomorphic section $\Phi$ of
$\text{End}\, (E) \otimes K_{\Sigma}$, 
(semi-)stability of it can be defined in the same style of Definition
\ref{def:stable}. 
Hitchin \cite{Hit} proved the 
existence of a solution to the equations
can be deduced from the assumption of stability (see also
\cite{Si}).

\paragraph{$\mathbf{L}$-twisted Hitchin equations.}

There is a generalization of the Hitchin equations on a compact Riemann
surface $\Sigma$ by Lin \cite{L} (see also \cite{GR}), in which one takes a line
bundle $L$ on $\Sigma$ instead of $K_{\Sigma}$. 
Namely, one considers the following equations on $X$
for a pair $(A, \Phi)$ consisting
of a connection $A$ and a section $\Phi$ of
$\text{End}\, (E) \otimes L$. 
\begin{equation*}
\bar{\partial}_{A} \Phi = 0 , \quad 
\Lambda F_{A}  + \sigma( [ \Phi , 
 \bar{\Phi}] ) -  \frac{i \lambda (E)}{2} Id_{E} \, = 0 , 
\label{eq:tHit}
\end{equation*}
where $\sigma$ is the contraction of sections of $\text{End}\, (E)
\otimes L \otimes L^{*}$ to those of $\text{End}\, (E)$. 
We call these the {\it $L$-twisted Hitchin equations}.
This was further generalized by \'{A}lvarez-C\'{o}nsul and
Garc\'{\i}a-Prada \cite{AG} 
(see also \cite{BGM}).

As in the case of the Vafa--Witten equations described in the previous
section,   
a similar functional $D_{\Phi} (h,k)_{\Sigma}$ for a pair of Hermitian metrics
$h$ and $k$ 
  with properties
such as Propositions \ref{prop:crit} and \ref{prop:convex} 
can be also defined for the $L$-twisted Hitchin equations as well.

Since the existence of a solution to the $L$-twisted Hitchin equations 
under the assumption of stability with $L$-valued
operator was proved by Lin \cite{L} 
(this was further generalized by \'{A}lvarez-C\'{o}nsul and
Garc\'{\i}a-Prada \cite{AG}, 
see also \cite{BGM}), thus we have the following.

\begin{proposition}
Let $E$ be a holomorphic vector bundle on compact Riemann surface
 $\Sigma$, and let $L$ be a line bundle on $X$. 
Let $\Phi$ be a holomorphic section of $\text{End}\, (E) 
\otimes L$. 
If $(E, \Phi)$ is semi-stable with $L$-valued
 operator, 
then for any fixed Hermitian metric $k$ in
 $\text{Herm}^{+}\, (E)$, the set $\{ \mathcal{D}_{\Phi} (h, k)_{\Sigma} , \, 
 h \in \text{Herm}^{+}\, (E) \}$ is bounded below. 
\label{th:Cardona}
\end{proposition}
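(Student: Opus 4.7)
The plan is to combine Lin's existence theorem for stable pairs with the convexity of the Donaldson-type functional, and then to bootstrap from the stable case to semi-stability by a Jordan--H\"older induction. As a preliminary step I would establish, by a verbatim adaptation of the proofs of Propositions \ref{prop:crit} and \ref{prop:convex}, that $\mathcal{D}_{\Phi}(\cdot, k)_{\Sigma}$ is differentiable with critical points precisely the solutions of the $L$-twisted Hitchin equations, and is convex along geodesics in $\text{Herm}^{+}(E)$. The convexity follows from the second-derivative formula
\begin{equation*}
\frac{d^{2}}{dt^{2}} \mathcal{D}_{\Phi}(h_{t}, k)_{\Sigma} = \|D' v_{t}\|_{L^{2}}^{2} + \|[\bar{\Phi}^{h_{t}}, v_{t}]\|_{L^{2}}^{2} \geq 0,
\end{equation*}
obtained exactly as in the surface case after using $\partial_{t}\bar{\Phi}^{h_{t}} = [\bar{\Phi}^{h_{t}}, v_{t}]$ and an integration by parts; in particular, any critical point is an absolute minimiser.

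When $(E, \Phi)$ is stable (with $L$-valued operator), Lin's theorem \cite{L} produces a Hermitian metric $\tilde{h}$ solving the $L$-twisted Hitchin equations, and the convexity above then gives $\mathcal{D}_{\Phi}(h, k)_{\Sigma} \geq \mathcal{D}_{\Phi}(\tilde{h}, k)_{\Sigma}$ for every $h \in \text{Herm}^{+}(E)$, which is the desired lower bound. The poly-stable case follows by splitting $(E, \Phi)$ into orthogonal stable summands of equal slope and using the additivity of $\mathcal{D}_{\Phi}$ on such direct-sum splittings of the reference metric $k$.

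The main obstacle is the strictly semi-stable case, which is precisely the one required later for the Mehta--Ramanathan reduction on surfaces, since the restriction of a stable surface-pair to a generic curve is only semi-stable \emph{a priori}. Here I would argue by induction on the rank using a Jordan--H\"older filtration $0 = E_{0} \subset E_{1} \subset \cdots \subset E_{n} = E$ by $\Phi$-invariant subsheaves whose successive quotients $(E_{i}/E_{i-1}, \Phi_{i})$ are stable of slope $\mu(E)$. Given $h \in \text{Herm}^{+}(E)$, one deforms it through a one-parameter family of complex gauge transformations dilating the filtration (Donaldson's device in the Hermitian--Einstein case) to a metric $h'$ for which the filtration is orthogonal and with respect to which $\Phi$ is block upper-triangular. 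Along this path, $\mathcal{D}_{\Phi}(\cdot, k)_{\Sigma}$ decomposes into the sum of the analogous functionals on the stable graded pieces plus an excess term coming from the second fundamental forms of the filtration together with the off-diagonal parts of $\Phi$. The equal-slope condition forces the $Q_{1}$-contribution of this excess to be uniformly controlled, while the $Q_{2}$-part is a non-negative quadratic expression in the second fundamental forms and the off-diagonal $\Phi$-components; combining this with the lower bound on each stable graded piece already obtained yields the required lower bound for the semi-stable $(E, \Phi)$.
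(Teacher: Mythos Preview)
The paper does not actually prove this proposition: it simply records that Lin \cite{L} (and more generally \'{A}lvarez-C\'{o}nsul--Garc\'{\i}a-Prada \cite{AG}, \cite{BGM}) established the existence of a solution to the $L$-twisted Hitchin equations under stability, and asserts that the lower bound follows. Your proposal is therefore not to be compared with a proof in the paper but rather regarded as a filling-in of that citation.

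Your treatment of the stable and poly-stable cases is correct and is exactly the implicit argument behind the paper's one-line justification: Lin's solution is a critical point, and the convexity formula for $\mathcal{D}_{\Phi}(\cdot,k)_{\Sigma}$ (the curve analogue of Proposition~\ref{prop:convex}) makes it an absolute minimum; poly-stable follows by additivity on an orthogonal direct sum.

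The semi-stable step, however, has a genuine gap. Two points:
\begin{itemize}
\item The sentence ``deforms it \dots\ to a metric $h'$ for which the filtration is orthogonal'' is not a well-posed move: any Hermitian metric already induces a $C^{\infty}$ orthogonal splitting of a holomorphic filtration. What one actually needs is to compare $\mathcal{D}_{\Phi}$ on $(E,\Phi)$ with the corresponding functional on the associated graded $(\mathrm{gr}\,E,\mathrm{gr}\,\Phi)$, and that comparison involves a change of \emph{holomorphic structure} (scaling the extension class and the off-diagonal part of $\Phi$ to zero), not merely of metric.
\item The assertion that ``the $Q_{2}$-part is a non-negative quadratic expression in the second fundamental forms and the off-diagonal $\Phi$-components'' is unproved and not obviously true. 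Transporting the poly-stable solution on $\mathrm{gr}\,E$ back to $E$ does \emph{not} yield a critical point of $\mathcal{D}_{\Phi}$ on $E$ (the curvature picks up $\beta\wedge\beta^{*}$ terms), so convexity alone does not immediately give the bound you want.
\end{itemize}

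A cleaner and standard way to handle the strictly semi-stable case---and likely what the paper has in mind via the reference to \cite{AG}---is the contrapositive: if $\mathcal{D}_{\Phi}(\cdot,k)_{\Sigma}$ were unbounded below, convexity along a suitable geodesic ray $h_{t}=k\,e^{ts}$ forces the asymptotic slope $\lim_{t\to\infty}\frac{d}{dt}\mathcal{D}_{\Phi}(h_{t},k)_{\Sigma}$ to be strictly negative; the spectral decomposition of $s$ then produces a $\Phi$-invariant filtration whose weighted slope inequality violates semi-stability. This is the argument in Simpson \cite{Si} and carries over verbatim to the $L$-twisted setting. If you want to keep the Jordan--H\"older induction, you must supply an honest computation of the difference $\mathcal{D}_{\Phi}(h,k)_{E}-\sum_{i}\mathcal{D}_{\Phi_{i}}(h_{i},k_{i})_{E_{i}/E_{i-1}}$ and show it is bounded below, which is more work than your outline suggests.
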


We use this in the proof of
Theorem \ref{ith:exist}.

\subsection{Proof of Theorem \ref{ith:exist}}
\label{sec:exist}

In this section, we prove Theorem \ref{ith:exist}. 
We follow a proof for the Hermitian--Einstein case, given by Donaldson
\cite{D} 
(see also \cite[Chap.VI]{Ko}). 
We consider the following evolution equation, which is the gradient flow
for the functional $D_{\varphi} (h ,k)$. 
\begin{equation}
 \partial_{t} h_{t} 
 = - \left( i ( \Lambda F_{h_{t}} + *  [ \varphi,
      \bar{\varphi}^{h_{t}} ] ) 
    - \frac{\lambda (E)}{2} h_{t} \right) . 
\label{eq:hf} 
\end{equation}
The above evolution equation \eqref{eq:hf} has a
unique smooth solution for $0 \leq t < \infty$. 
One can see this by using a similar argument in \cite[Chap.VI]{Ko}.

We then prove the following as in the Hermitian--Einstein case. 
\begin{proposition}
Let $h_{t} \, ( 0 \leq t < \infty)$ be a 1-parameter family in
 $\text{Herm}^{+} \, (E)$, which satisfies \eqref{eq:hf}.
Then
\begin{enumerate}
\item[$(i)$] $\displaystyle{\frac{d}{dt} D_{\varphi} (h_{t} ,k)} =  
  - \left\| i ( \Lambda F_{h_{t}} + *  [ \varphi,
	     \bar{\varphi}^{h_{t}}] ) 
   - \frac{\lambda (E)}{2}  Id_{E} \right\|_{L^2}^2  \leq 0 $. 
\item[$(ii)$] $\displaystyle{\max_{X} \left| 
 i ( \Lambda F_{h_{t}} + * 
[ \varphi, \bar{\varphi}^{h_{t}}]) 
   - \frac{\lambda (E)}{2}  Id_{E}  \right|^2}$ is a monotone decreasing function of $t$. 
\item[$(iii)$] If $D_{\varphi} (h_{t} ,k)$ is bounded from below, then 
$$
\max_{X} \left| i ( \Lambda F_{h_{t}} + * 
[ \varphi,
	     \bar{\varphi}^{h_{t}} ] ) 
   - \frac{\lambda (E)}{2}  Id_{E} \right|^2 \to 0
$$ 
as $t \to \infty$. 
\end{enumerate} 
\label{prop:f}
\end{proposition}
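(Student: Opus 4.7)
The plan is to follow the template of Donaldson's argument \cite{D} (see also \cite[Chap.~VI, \S3]{Ko}) for the analogous Hermitian--Einstein flow, carrying through the extra commutator terms coming from $\varphi$.

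For part $(i)$, I would start from the formula \eqref{eq:fd} established in the proof of Proposition \ref{prop:crit}, namely
\begin{equation*}
\frac{d}{dt} D_{\varphi}(h_{t}, k) = \sqrt{-1} \int_{X} \text{tr}\bigl( v_{t} \cdot \mu_{\varphi}(h_{t}) \bigr),
\end{equation*}
and then substitute the flow equation \eqref{eq:hf}, which by definition of $v_{t}$ gives $v_{t} = -\bigl( i(\Lambda F_{h_{t}} + *[\varphi, \bar{\varphi}^{h_{t}}]) - \frac{\lambda(E)}{2} Id_{E}\bigr)$. After recognising that $\sqrt{-1}\, \mu_{\varphi}(h_{t}) \wedge \omega^{-1}$ is (up to normalisation) exactly the same bracket, the integrand becomes $-|\sigma_{t}|^{2}$, where $\sigma_{t} := i(\Lambda F_{h_{t}} + *[\varphi, \bar{\varphi}^{h_{t}}]) - \frac{\lambda(E)}{2} Id_{E}$, yielding $(i)$.

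For $(ii)$ I would derive a heat-type inequality for $|\sigma_{t}|^{2}$. Writing $D = D'+\bar{\partial}$ for the Chern connection of $h_{t}$ and using $\partial_{t} \bar{\varphi}^{h_{t}} = [\bar{\varphi}^{h_{t}}, v_{t}]$, a direct calculation (mirroring \eqref{eq:dtmu}) gives
\begin{equation*}
\partial_{t} \sigma_{t} = -\Delta_{h_{t}} \sigma_{t} - \bigl[\varphi, [\bar{\varphi}^{h_{t}}, \sigma_{t}]\bigr],
\end{equation*}
so that the pointwise function $|\sigma_{t}|^{2}_{h_{t}}$ satisfies an inequality of the form $(\partial_{t} + \Delta)|\sigma_{t}|^{2} \leq 0$, the extra $[\varphi, [\bar{\varphi}, \cdot]]$ term contributing a non-negative quantity after pairing with $\sigma_{t}$ (it equals $|[\bar{\varphi}^{h_{t}}, \sigma_{t}]|^{2} \geq 0$). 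Applying the parabolic maximum principle to $|\sigma_{t}|^{2}$ then gives $(ii)$. This is the step I expect to be the main obstacle: one must check carefully that the $\varphi$-commutator terms enter with the correct sign, which is the place where the proof genuinely departs from the Hermitian--Einstein case and uses $\bar{\partial}\varphi = 0$.

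Finally, for $(iii)$, I would combine $(i)$ and $(ii)$. Integrating $(i)$ over $[0,\infty)$, the assumption that $D_{\varphi}(h_{t}, k)$ is bounded below forces
\begin{equation*}
\int_{0}^{\infty} \| \sigma_{t} \|_{L^{2}}^{2}\, dt < \infty,
\end{equation*}
so there is a sequence $t_{n} \to \infty$ with $\| \sigma_{t_{n}} \|_{L^{2}} \to 0$. A standard Moser iteration (or elliptic estimate applied to the inequality $(\partial_{t}+\Delta)|\sigma_{t}|^{2} \leq 0$) bounds $\max_{X}|\sigma_{t}|^{2}$ by a constant multiple of the $L^{2}$ norm on a time interval of unit length, so $\max_{X}|\sigma_{t_{n}}|^{2} \to 0$ along the sequence. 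By the monotonicity in $(ii)$, the full limit $\max_{X}|\sigma_{t}|^{2} \to 0$ as $t \to \infty$ follows.
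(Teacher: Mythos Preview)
Your proposal is correct and follows essentially the same route as the paper's proof, which is the Donaldson--Kobayashi template with the $\varphi$-commutator carried along: the paper packages the Laplacian and the commutator into a single operator $\Box' s := *\bigl(i D'' D' s \wedge \omega + [\varphi,[\bar{\varphi}^{h_t},s]]\bigr)$ and verifies $(\partial_t+\Box')\sigma_t=0$, but this is only a cosmetic difference from your direct computation of $\partial_t\sigma_t$. Your handling of (iii) via integrability of $\|\sigma_t\|_{L^2}^2$, a parabolic $L^\infty$--$L^2$ estimate, and the monotonicity from (ii) is exactly what the paper invokes by reference to \cite[Chap.~VI, \S9]{Ko}.
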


\begin{proof} 
The proof here is a modification of that given in \cite[Chap.VI \S 9,
 pp. 224--226]{Ko}. 
Firstly, the above (i) is nothing but Proposition \ref{prop:crit}. 
Namely, as $h_{t}$ satisfies \eqref{eq:hf}, we get 
\begin{equation*}
\begin{split}
&\frac{d}{dt} 
 D_{\varphi} (h_{t} , k) \\
&\quad = - \left( i(  \Lambda F_{h_{t}} + [ \varphi,
 \bar{\varphi}^{h_{t}}] ) 
   - \frac{\lambda (E)}{2}  Id_{E} , 
     i ( \Lambda F_{h_{t}} + * 
[ \varphi, \bar{\varphi}^{h_{t}}]) 
   - \frac{\lambda (E)}{2}  Id_{E}   \right) \\ 
&\quad = - \left\| i ( \Lambda F_{h_{t}} + * 
[ \varphi, \bar{\varphi}^{h_{t}}] ) 
   - \frac{\lambda (E)}{2}  Id_{E}  \right\|_{L^2}^2 . 
\end{split}
\end{equation*}

To prove (ii), we define the operator $ \Box' s 
 := * ( i D'' D' s \wedge \omega  + [ \varphi, 
 [\bar{\varphi}^{h_{t}} , s ] ] )$. 
One can easily check that $\Box' v_{t} 
 = \partial_{t} \left( 
i (\Lambda F_{h_{t}} + * [ \varphi, \bar{\varphi}^{h_{t}}]  ) 
   - \frac{\lambda (E)}{2}  Id_{E}  \right)$. 
Then, by using the evolution equation \eqref{eq:hf}, we get 
$$(\partial_{t} + \Box') \left( 
i (\Lambda F_{h_{t}} + [ \varphi, \bar{\varphi}^{h_{t}}]  ) 
   - \frac{\lambda (E)}{2}  Id_{E}  \right) =0 . $$
We also have 
\begin{equation*}
\begin{split}
&\Delta \left| i ( \Lambda F_{h_{t}} + * [ \varphi, \bar{\varphi}^{h_{t}} ] )
   - \frac{\lambda (E)}{2}  Id_{E}  \right|^2  
\\  & \qquad = - \partial_{t} \left| 
 i (\Lambda F_{h_{t}} + * [ \varphi, \bar{\varphi}^{h_{t}}]  ) 
   - \frac{\lambda (E)}{2} Id_{E}  \right|^2 \\
 & \qquad \qquad -2 \left| D'' \left(
 i ( \Lambda F_{h_{t}} + *  [ \varphi, \bar{\varphi}^{h_{t}}] ) 
   - \frac{\lambda (E)}{2} Id_{E}  \right) \right|^2 .\\ 
\end{split}
\end{equation*}
Hence, we get 
\begin{equation*}
\begin{split} 
(\partial_{t} + \Delta ) 
& \left|  i ( \Lambda F_{h_{t}} + * [ \varphi,
 \bar{\varphi}^{h_{t}}] ) 
   - \frac{\lambda (E)}{2}  Id_{E} \right|^2 \\ 
 &= -2 \left| 
i ( \Lambda F_{h_{t}} + * [ \varphi, \bar{\varphi}^{h_{t}}] ) 
   - \frac{\lambda (E)}{2}  Id_{E} \right|^2 \leq 0 .  
\end{split}
\end{equation*}
Thus, the maximum principle implies (ii).

Once (i) and (ii) are obtained, then (iii) follows from a similar 
 argument as in \cite[Chap.VI \S 9, pp 225--226]{Ko}. 
Namely, by using a maximum principle argument, one bounds 
$\max_{X} | i ( \Lambda F_{h_{t}} + * [ \varphi, \bar{\varphi}^{h_{t}}] ) 
   - \frac{\lambda (E)}{2}  Id_{E} |^2$ by 
$ \|  i ( \Lambda F_{h_{t}} + * [ \varphi, \bar{\varphi}^{h_{t}}] ) 
   - \frac{\lambda (E)}{2}  Id_{E}  \|_{L^2}^{2}$. 
One then deduces 
$\| i ( \Lambda F_{h_{t}} + * [ \varphi, \bar{\varphi}^{h_{t}}] ) 
   - \frac{\lambda (E)}{2}  Id_{E} \|_{L^2}^{2} \to 0$ as $t \to \infty$
 from (ii). 
\end{proof}

We then prove that $D_{\varphi} (h,k)$ is bounded below under the 
assumption 
 that $E$ is semi-stable.

Firstly, we recall that the notion of stability has the following 
generalization introduced by Simpson
\cite[\S 3]{Si2}.

\begin{definition} 
(a) 
Let $\mathcal{W}$ be a vector bundle on a compact K\"{a}hler manifold $X$. 
A sheaf $\mathcal{E}$ together with a map $\eta : \mathcal{E} \to
\mathcal{E} \otimes \mathcal{W}$ is called 
{\it a sheaf with $\mathcal{W}$-valued operator} $\eta$. 
(b) 
A  torsion-free sheaf $\mathcal{E}$ with $\mathcal{W}$-valued operator
 $\eta$ is said to be {\it semi-stable} 
if $ \mu ( \mathcal{F} ) \leq \mu ( \mathcal{E} ) $
for any coherent subsheaves $\mathcal{F}$ of $\mathcal{E}$ with 
$\text{rank} (\mathcal{F}) 
< \text{rank} (\mathcal{E})$ and $\eta (\mathcal{F}) \subset
\mathcal{F} \otimes \mathcal{W}$. 
It is called {\it stable} if the strict inequality holds in the
 definition of the semi-stability.  
\label{def:stablew}
\end{definition}

The (semi-)stability of Definition \ref{def:stable} is included in
this generalized notion of (semi-)stability by taking $\mathcal{W}
= K_{X}$. 
This notion was further generalized to {twisted quiver sheaves} 
by \'{A}lvarez-C\'{o}nsul and Garc\'{\i}a-Prada \cite{AG} 
(see also \cite{BGM}).

As mentioned in \cite[pp.37--38]{Si2}, 
we note that the arguments of 
Mehta--Ramanathan
\cite{MR1}, \cite{MR2} indicate the following.

\begin{proposition}[\cite{Si2}, Prop.~3.6]
If $\mathcal{E}$ is a torsion-free (semi-)stable sheaf on $X$ with
 $\mathcal{W}$-valued operator, then there exists a positive integer $m$
 such that, for a generic smooth curve $D \subset X$ in a linear system 
$| \mathcal{O}_{X} (m) |$, 
$\mathcal{E}|_{D}$ is a (semi-)stable sheaf with
 $\mathcal{W}|_{D}$-valued operator. 
\label{prop:MRS}
\end{proposition}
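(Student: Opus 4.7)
The plan is to deduce the statement from the classical Mehta--Ramanathan restriction theorem \cite{MR1}, \cite{MR2} by verifying that, for $m$ sufficiently large and $D \in |\mathcal{O}_X(m)|$ generic, the correspondence between potential destabilizers of $\mathcal{E}$ and of $\mathcal{E}|_D$ preserves the $\mathcal{W}$-invariance property.

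First I would apply the classical Mehta--Ramanathan theorem in its lifting form: for $m \geq m_0$ and a generic smooth $D \in |\mathcal{O}_X(m)|$, the restriction $\mathcal{E}|_D$ is torsion-free, and every saturated subsheaf $\mathcal{F}_D \subset \mathcal{E}|_D$ with $\mu(\mathcal{F}_D) \geq \mu(\mathcal{E}|_D)$ arises uniquely as $\mathcal{F}|_D$ for a saturated subsheaf $\mathcal{F} \subset \mathcal{E}$ of the same rank, with the corresponding slope comparison $\mu(\mathcal{F}) \geq \mu(\mathcal{E})$ (strictness preserved). In particular, any potential destabilizer of $\mathcal{E}|_D$ lifts canonically to one of $\mathcal{E}$.

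The main new step is to show that the lift of an $\eta|_D$-invariant subsheaf is itself $\eta$-invariant, provided $m$ is large enough. Consider the morphism
\begin{equation*}
\phi : \mathcal{F} \hookrightarrow \mathcal{E} \xrightarrow{\eta} \mathcal{E} \otimes \mathcal{W} \twoheadrightarrow (\mathcal{E}/\mathcal{F}) \otimes \mathcal{W},
\end{equation*}
whose vanishing is precisely the $\eta$-invariance of $\mathcal{F}$. By hypothesis $\phi|_D = 0$, so tensoring the ideal sequence $0 \to \mathcal{O}_X(-D) \to \mathcal{O}_X \to \mathcal{O}_D \to 0$ with $\mathcal{H}om(\mathcal{F}, (\mathcal{E}/\mathcal{F}) \otimes \mathcal{W})$ and passing to global sections exhibits $\phi$ as an element of $H^0(X, \mathcal{H}om(\mathcal{F}, (\mathcal{E}/\mathcal{F}) \otimes \mathcal{W}) \otimes \mathcal{O}_X(-D))$. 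For $m$ large this vanishes by a standard slope comparison: twisting by $\mathcal{O}_X(-D)$ subtracts $m\,[\omega]^2$ from the maximal Harder--Narasimhan slope of the target, while the minimal Harder--Narasimhan slope of $\mathcal{F}$ stays bounded below, so eventually $\mu_{\min}(\mathcal{F}) > \mu_{\max}\bigl((\mathcal{E}/\mathcal{F}) \otimes \mathcal{W} \otimes \mathcal{O}_X(-D)\bigr)$ and the Hom group is forced to vanish.

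Putting the pieces together: if $\mathcal{E}|_D$ were not (semi-)stable as a sheaf with $\mathcal{W}|_D$-valued operator, the resulting $\eta|_D$-invariant destabilizer would lift to an $\eta$-invariant destabilizer of $\mathcal{E}$, contradicting the (semi-)stability of $(\mathcal{E}, \eta)$. The hardest point is the uniformity of the choice of $m_0$ in the second step: the slope bound $\mu_{\min}(\mathcal{F}) \geq c$ must hold for every possible lift $\mathcal{F}$. This is supplied by Grothendieck's boundedness theorem, which ensures that the saturated subsheaves of $\mathcal{E}$ of bounded rank with $\mu \geq \mu(\mathcal{E})$ form a bounded family; only finitely many Hilbert polynomials arise, and a uniform $m_0$ can then be selected.
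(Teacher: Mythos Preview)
The paper does not actually prove this proposition; it is quoted as \cite[Prop.~3.6]{Si2}, prefaced only by the remark that ``the arguments of Mehta--Ramanathan \cite{MR1}, \cite{MR2} indicate the following.'' So there is no argument in the paper to compare against beyond that pointer to Simpson's adaptation of the Mehta--Ramanathan method.

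That said, your proposal has a genuine gap in its first step. The ``lifting form'' of Mehta--Ramanathan you invoke --- that for $m\gg 0$ and generic $D$, \emph{every} saturated subsheaf $\mathcal{F}_D\subset\mathcal{E}|_D$ with $\mu(\mathcal{F}_D)\ge\mu(\mathcal{E}|_D)$ arises uniquely as $\mathcal{F}|_D$ for some saturated $\mathcal{F}\subset\mathcal{E}$ --- is not what \cite{MR1}, \cite{MR2} establish, and there is no reason to expect it in general. Their argument extends only the \emph{maximal} destabilizing subsheaf: the point is that this subsheaf is canonically determined, hence varies in a flat family as $D$ moves through $|\mathcal{O}_X(m)|$, and this family can then be spread out to a subsheaf on $X$. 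An arbitrary (non-maximal) destabilizer of $\mathcal{E}|_D$ has no reason to deform coherently with $D$, let alone to extend. Since $(\mathcal{E},\eta)$ is only assumed $\eta$-(semi-)stable, not (semi-)stable as an ordinary sheaf, the ordinary maximal destabilizer of $\mathcal{E}|_D$ need not be $\eta|_D$-invariant, and conversely the $\eta|_D$-invariant destabilizer you start from need not be ordinary-maximal; so you cannot feed it into the classical lifting step.

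The route Simpson actually takes is not to decouple the lifting from the invariance, but to rerun the Mehta--Ramanathan argument inside the category of sheaves with $\mathcal{W}$-valued operator: one takes the maximal \emph{$\eta|_D$-invariant} destabilizer, which is again canonical and hence varies in a family over the linear system, and extends that. The $\eta$-invariance of the resulting subsheaf on $X$ is then automatic (the obstruction map $\phi$ vanishes on a covering family of divisors, hence everywhere), so your second step becomes unnecessary. Your twisting-and-vanishing argument in that step is correct on its own terms, but it rests on a lift whose existence you have not secured.
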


From this, we get the following.

\begin{Corollary}
Let $( \mathcal{E} , \varphi)$ be a torsion-free (semi-)stable pair 
 on $X$ in the sense of Definition \ref{def:stable}. 
Then, 
there exists a positive integer $m$
 such that, for a generic smooth  curve $D \subset X$ in a linear system 
$| \mathcal{O}_{X} (m) |$, 
$( \mathcal{E}|_{D} , \varphi|_{D} )$ is a (semi-)stable pair 
with $(K_{D} \otimes \mathcal{O}_{D} (-D))$-valued operator on $D$. 
\label{prop:MR}
\end{Corollary}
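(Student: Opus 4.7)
The plan is to derive the corollary directly from Proposition \ref{prop:MRS} by taking the twisting sheaf to be $\mathcal{W} = K_{X}$ and then invoking the adjunction formula to rewrite the restriction $K_{X}|_{D}$ in the form that appears in the statement. The first step is to observe that the notion of stability for a pair $(\mathcal{E}, \varphi)$ in the sense of Definition \ref{def:stable} coincides, tautologically, with the notion of stability of $\mathcal{E}$ as a torsion-free sheaf with $K_{X}$-valued operator $\eta = \varphi$ in the sense of Definition \ref{def:stablew}: a coherent subsheaf $\mathcal{F} \subset \mathcal{E}$ is $\varphi$-invariant, i.e.\ $\varphi(\mathcal{F}) \subset \mathcal{F} \otimes K_{X}$, precisely when $\eta(\mathcal{F}) \subset \mathcal{F} \otimes \mathcal{W}$. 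Hence the hypothesis supplies a (semi-)stable torsion-free sheaf with $K_{X}$-valued operator in the sense of Simpson, and Proposition \ref{prop:MRS} produces a positive integer $m$ such that for a generic smooth curve $D \in |\mathcal{O}_{X}(m)|$, the restriction $\mathcal{E}|_{D}$ is a (semi-)stable sheaf with $K_{X}|_{D}$-valued operator $\varphi|_{D}$.

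The second step is a direct identification of $K_{X}|_{D}$. Since $D$ is a smooth curve on the smooth projective surface $X$, the adjunction formula gives $K_{D} \cong (K_{X} \otimes \mathcal{O}_{X}(D))|_{D}$, equivalently
\begin{equation*}
K_{X}|_{D} \cong K_{D} \otimes \mathcal{O}_{D}(-D).
\end{equation*}
Consequently the restricted section $\varphi|_{D}$ is naturally a section of $\text{End}(\mathcal{E}|_{D}) \otimes (K_{D} \otimes \mathcal{O}_{D}(-D))$, and a coherent subsheaf of $\mathcal{E}|_{D}$ is $\varphi|_{D}$-invariant (in the generalized sense with $(K_{D} \otimes \mathcal{O}_{D}(-D))$-valued operator) if and only if it is invariant under the operator furnished by Proposition \ref{prop:MRS}. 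The (semi-)stability transported by that proposition is therefore exactly the (semi-)stability of the pair $(\mathcal{E}|_{D}, \varphi|_{D})$ with $(K_{D} \otimes \mathcal{O}_{D}(-D))$-valued operator, as required.

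Since Proposition \ref{prop:MRS} is quoted as a black box, essentially no genuine obstacle remains; the only issue worth flagging is a bookkeeping one, namely verifying that the translation between Definition \ref{def:stable} and Definition \ref{def:stablew} is indeed a mere relabeling and that the adjunction isomorphism is compatible with the restriction of the section $\varphi$, both of which are straightforward.
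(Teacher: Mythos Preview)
Your proof is correct and follows exactly the same route as the paper: apply Proposition~\ref{prop:MRS} with $\mathcal{W}=K_{X}$ to get (semi-)stability of $\mathcal{E}|_{D}$ with $K_{X}|_{D}$-valued operator, then use the adjunction formula $K_{X}|_{D}\cong K_{D}\otimes\mathcal{O}_{D}(-D)$. The paper's proof is just a terser version of what you wrote.
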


\begin{proof}
This just follows from Proposition \ref{prop:MRS} and the adjunction
 formula. 
In fact, 
from Proposition \ref{prop:MRS}, 
there exists a positive integer $m$ such that, for a generic smooth curve $D
 \subset X$ in
 $| \mathcal{O}_{X} (m) |$, $\mathcal{E}|_{D}$ is a (semi-)stable sheaf with $K_{X}
 |_{D}$-valued operator. 
On the other hand, from the adjunction formula, we have 
$K_{X} |_{D} = K_{D} \otimes  \mathcal{O}_{D} (-D)$. 
Hence, the assertion holds. 
\end{proof}

We now describe the behaviour of the functional $D_{\varphi} (h,k)$
when it is restricted to a smooth curve $D \subset X$. 
The notation we use is that $D_{\varphi} (h,k)_{X}$ is the functional for the
Vafa--Witten equations on $E$, 
and $D_{\varphi} (h,k)_{D}$ is that for the $L$-twisted Hitchin
equations on $E|_{D}$, where we take $L = K_{D} \otimes \mathcal{O}_{D} (-D)$.

\begin{proposition}
Let $E$ be a holomorphic vector bundle on a smooth projective surface
 $X$, and let $\varphi$ be a holomorphic section of $\text{End}\, (E)
 \otimes K_{X}$. 
Let $D$ be a smooth curve of $X$ 
such that the line bundle $F$ defined by $D$ is ample. 
We use a positive closed $(1,1)$-form $\omega$ representing the Chern
 class of $F$ as a K\"{a}hler form for $X$.  
Then for a fixed Hermitian metric $k$ of $E$ and for all Hermitian
 metrics $h$ of $E$, we have 
\begin{equation}
D_{\varphi} (h ,k )_{X} 
\geq  D_{\varphi} (h, k)_{D} 
- C \left( \max_{X} \left| i ( \Lambda F_{h} 
   + * [ \varphi , \bar{\varphi}^{h}] ) - \frac{\lambda (E)}{2} Id_{E}   \right|^2 \right) 
   -C',
\end{equation}
where $C$ and $C'$ are positive constants. 
\label{prop:fr}
\end{proposition}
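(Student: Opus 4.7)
The strategy follows Donaldson's Mehta--Ramanathan-type argument \cite{D} for the Hermitian--Einstein functional (as presented in \cite[Chap.~VI, \S 10]{Ko}), modified to accommodate the extra field $\varphi$. The three main ingredients are: the Poincar\'e--Lelong formula relating $\omega$ to $D$; the Bott--Chern identity of Proposition \ref{prop:sec}; and the adjunction formula $K_{X}|_{D} = K_{D} \otimes \mathcal{O}_{D}(-D) =: L$ from Corollary \ref{prop:MR}, which casts $(E|_{D}, \varphi|_{D})$ as an $L$-twisted Hitchin pair on $D$.

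First, I would pick a holomorphic section $s \in H^{0}(X, F)$ defining $D$ together with a smooth Hermitian metric on $F$ whose Chern curvature equals $\omega$. Then $u := -\tfrac{1}{2\pi} \log|s|^{2}$ is smooth off $D$ with only logarithmic singularities along $D$, locally integrable, and satisfies $\omega = [D] + \sqrt{-1}\,\partial \bar{\partial} u$ as currents. Set $\tilde R := Q_{2}(h,k) - \tfrac{\lambda(E)}{2} Q_{1}(h,k)\, \omega$, the $(1,1)$-form appearing in Proposition \ref{prop:sec}. Since $dV_{g} = \omega^{2}/2$, one obtains
$$D_{\varphi}(h,k)_{X} = \int_{X} \tilde{R} \wedge \omega + \tfrac{\lambda(E)}{4} \int_{X} Q_{1}(h,k)\, \omega^{2},$$
and Poincar\'e--Lelong splits the first integral as
$$\int_{X} \tilde{R} \wedge \omega = \int_{D} \tilde{R}|_{D} + \int_{X} \tilde{R} \wedge \sqrt{-1}\,\partial \bar{\partial} u.$$

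By adjunction, $\varphi|_{D}$ is a section of $\text{End}\,(E|_{D}) \otimes L$ and the restriction $\tilde{R}|_{D}$ coincides with the analogous Bott--Chern combination for the $L$-twisted Hitchin functional of Section~\ref{sec:Hit}, so $\int_{D} \tilde{R}|_{D}$ agrees with $D_{\varphi}(h,k)_{D}$ modulo a constant depending only on $k$, which joins $C'$. A standard cut-off approximation of $u$ justifies integration by parts in the bulk term; Proposition \ref{prop:sec} then gives
$$\int_{X} \tilde{R} \wedge \sqrt{-1}\,\partial \bar{\partial} u = -\tfrac{1}{2}\int_{X} u\, \text{tr}(M_{h}^{2}) + \tfrac{1}{2}\int_{X} u\, \text{tr}(M_{k}^{2}),$$
where $M_{h} := \sqrt{-1}(F_{h} + \Lambda[\varphi, \bar{\varphi}^{h}]) - \tfrac{\lambda(E)}{2} Id_{E}\, \omega$. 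The $k$-integral and the residual $Q_{1}$-term depend only on $k$ and are absorbed into $C'$.

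It therefore suffices to bound $\int_{X} u\, \text{tr}(M_{h}^{2}) \leq C \max_{X} |\mu_{\varphi}(h)|^{2} + C''$. Decompose $M_{h}$ into its $\omega$-trace part $\tfrac{1}{2} \mu_{\varphi}(h)\, \omega$, whose pointwise size is exactly the discrepancy in the claim, and a primitive $(1,1)$-piece $M_{h}^{\mathrm{prim}}$. The primitive part is not pointwise dominated by the discrepancy, but its $L^{2}$-norm is controlled by the Chern--Weil identity: $\int_{X} \text{tr}(F_{h} \wedge F_{h})$ is a topological constant depending only on $c_{1}(E)$ and $c_{2}(E)$, and so (after expanding $\text{tr}(M_{h}^{2})$) the primitive $L^{2}$-norm can be expressed in terms of this fixed constant plus integrals controlled by $\|\mu_{\varphi}(h)\|_{L^{2}}^{2}$ and, after an integration by parts using $\bar{\partial}\varphi = 0$, by topological terms involving $\varphi$. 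The main obstacle is precisely this last step: since $u$ is unbounded along $D$, pointwise control of the primitive part is unavailable, and one must trade it for a Chern--Weil-based $L^{2}$-bound that integrates acceptably against $u \in L^{p}(X, \omega^{2})$ for all finite $p$.
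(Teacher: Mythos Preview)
Your setup via Poincar\'e--Lelong and Proposition~\ref{prop:sec} matches the paper's argument exactly, and your decomposition of $M_{h}$ into its $\omega$-trace part and its primitive part $S$ is precisely what the paper does (writing $F_{h}=(\Lambda F_{h})\,\omega+S$ with $S\wedge\omega=0$). The gap is in the last paragraph: you treat the primitive contribution as an obstacle requiring a Chern--Weil $L^{2}$ bound, but this cannot close because $u$ is unbounded---an $L^{2}$ bound on $S$ gives only $|S|^{2}\in L^{1}$, which does not pair against $u\in L^{p}\setminus L^{\infty}$.

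What you are missing is a sign observation that makes the primitive term disappear outright. On a surface one has, for an $\mathrm{End}(E)$-valued primitive $(1,1)$-form $S$, the pointwise inequality $\mathrm{tr}(S\wedge S)\geq 0$ (see \cite[p.~233]{Ko}). Since the section $s$ can be normalised so that $f=|s|^{2}/a\leq 1$, hence $\log f\leq 0$, the expansion
\[
(\log f)\,\mathrm{tr}\bigl(M_{h}^{2}\bigr)
=(\log f)\Bigl|\,i(\Lambda F_{h}+*[\varphi,\bar\varphi^{h}])-\tfrac{\lambda(E)}{2}\,\mathrm{Id}_{E}\Bigr|^{2}\omega^{2}
-(\log f)\,\mathrm{tr}(S\wedge S)
\]
has second term $\geq 0$ pointwise and can simply be dropped, yielding the one-sided inequality. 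Integrating the remaining trace term against the $L^{1}$ function $\log f$ then gives the $-C\max_{X}|\cdots|^{2}$ bound directly. No Chern--Weil identity, no topological control of $\|S\|_{L^{2}}$, and no integrability juggling against $u$ are needed.
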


\begin{proof}
Firstly, we recall the Poincar\'{e}--Lelong formula as in
 \cite[pp.12--13]{D}, \cite[Chap.VI \S10]{Ko}. 
We follow notations in \cite[Chap.VI \S10]{Ko}. 
Let $M$ be a smooth projective variety , and let $V$ be a closed
 hypersurface of $M$. 
Let $F$ be an ample line bundle on $M$ with 
a global holomorphic section $s$ such that $s^{-1} (0) = V$. 
We take a $C^{\infty}$ positive section $a$ of $F \otimes \bar{F}$. 
As $F $ is ample, we can take such $a$ so that $\omega := \frac{i}{2
 \pi} \partial \bar{\partial} \log \, a$ is positive. 
We use this as a K\"{a}hler form on $M$, and the restriction 
$\omega_{V}$ as a K\"{a}hler
 form on $V$. 
Put $f := |s|^2 /a$. Then, for all $(n-1,n-1)$-form $\eta$ on $M$, we
 have the the following as current (see \cite[Chap.VI \S 10]{Ko}). 
\begin{equation}
\frac{i}{2 \pi} \int_{M} 
 ( \log f) \partial \bar{\partial} \eta 
 = \int_{V} \eta - \int_{M} \eta \wedge \omega . 
\label{eq:current}
\end{equation}
We now set $\eta = Q_{2} (h ,k) - \frac{\lambda (E)}{2} 
Q_{1} (h, k) \wedge \omega$ in \eqref{eq:current}, and get  
\begin{equation*}
\begin{split}
D_{\varphi} 
 (h ,k)_{X} 
 &=  \int_{X} \left( Q_{2} (h ,k) - \frac{\lambda (E)}{2} Q_{1} (h,k)  \wedge \omega
 \right) \wedge \omega  \\
 &= \int_{D} \left( Q_{2} (h, k ) - \frac{\lambda (E)}{2} Q_{1} (h,k)
 \wedge \omega \right)  \\
  & \qquad  - \frac{i}{2 \pi} \int_{X} (\log f ) \partial \bar{\partial} 
 \left( Q_{2} ( h,k) - \frac{\lambda (E)}{2} Q_{1} (h,k)   \wedge \omega
 \right) . 
\end{split}
\end{equation*}

We then use Proposition \ref{prop:sec} to obtain 
\begin{equation}
\begin{split}
D_{\varphi} 
(h , k)_{X} &= 
 D_{\varphi} (h , k)_{D} \\
 & + \frac{1}{4 \pi} 
\int_{X} ( \log f) \left( \text{tr} 
 \left( i (  F_{h} 
   + \Lambda  [ \varphi , \bar{\varphi}^{h}]  ) - \frac{\lambda (E)}{2}
 Id_{E} \, \omega  \right)^2 \right) \\
 & \qquad  - \frac{1}{4 \pi} 
\int_{X} (\log f) \left( \text{tr}  
  \left( i ( F_{k} 
   + \Lambda  [ \varphi , \bar{\varphi}] ) - \frac{\lambda (E)}{2} Id_{E} \,
 \omega  
  \right)^2 \right) . 
\end{split}
\label{eq:ac}
\end{equation}
Note that the last term in the right hand side of \eqref{eq:ac} is
bounded by a constant as the Hermitian metric $k$ is fixed. 
We then estimate the second term in the right hand side of
\eqref{eq:ac}. 
As in \cite{Ko}, we use the primitive decomposition to obtain 
 $\text{End}(E)$-valued $(1,1)$-form $S$ with 
$F_{h} = (\Lambda F_{h}) \omega + S$ and $S \wedge \omega =0$. 
Then we get 
\begin{equation*}
\begin{split}
& \left( i (  F_{h} + \Lambda  [ \varphi , \bar{\varphi}^{h}] ) 
 - \frac{\lambda (E)}{2} Id_{E} \, \omega \right)^2 \\
& \qquad  \qquad 
 = \left( i (  \Lambda F_{h} + *  [ \varphi , \bar{\varphi}^{h}]
 ) \omega +
 iS  
 - \frac{\lambda (E)}{2} Id_{E} \omega \right)^2 \\ 
& \qquad \qquad \qquad 
 = \left( i (  \Lambda F_{h} + *  [ \varphi , \bar{\varphi}^{h}]
 ) 
 - \frac{\lambda (E)}{2} Id_{E}  \right)^2 \omega^2 - S \wedge S . \\ 
\end{split}
\end{equation*}
Hence, we obtain  
\begin{equation*}
\begin{split}
& (\log f)  \left(  \text{tr}    
\left( i (  F_{h} + \Lambda  [ \varphi , \bar{\varphi}^{h}] ) 
 - \frac{\lambda (E)}{2} Id_{E} \, \omega \right)^2 \right) \\
& \qquad  
 = (\log f) \left( \text{tr} \left( i (  \Lambda F_{h} + *  
[ \varphi , \bar{\varphi}^{h}] ) \omega 
 - \frac{\lambda (E)}{2} Id_{E} \, \omega \right)^2  -
 \text{tr}\, (S \wedge S) \right). \\
\end{split}
\end{equation*}
As in \cite[p.~233]{Ko}, $\text{tr}\, (S \wedge S) \geq 0$, and 
we can also assume that $f = |s|^2 /a \leq 1$, thus, 
\begin{equation*}
\begin{split}
& (\log f )  \left(  \text{tr}  
\left( i (  F_{h} + \Lambda [ \varphi , \bar{\varphi}^{h}] ) 
 - \frac{\lambda (E)}{2} Id_{E} \, \omega \right)^2 \right) \\
& \qquad \qquad  
 \geq (\log f) \left|  i (  \Lambda F_{h} + *  [ \varphi ,
 \bar{\varphi}^{h}] ) 
 - \frac{\lambda (E)}{2} Id_{E}  \right|^2 \omega^2 .\\
\end{split}
\end{equation*}
The proposition follows from this and \eqref{eq:ac}.  
\end{proof}

We now take a smooth curve  $D$ of $X$ in Corollary \ref{prop:MR}. 
Then, by Proposition \ref{prop:fr}, 
for a given Hermitian metric $k$ there exists positive constants $C$
and $C'$ such that 
\begin{equation*}
\begin{split}
&D_{\varphi} (h_{t} , k)_{X} \\
& \quad  \geq D_{\varphi} (h_{t} , k)_{D} - C \left( 
 \max_{X} \left|  
  i ( \Lambda F_{h_{t}} 
   + *  [ \varphi , \bar{\varphi}^{h_{t}}]) - \frac{\lambda (E)}{2} Id_{E} 
  \right|^2  \right) - C' . \\ 
\end{split}
\end{equation*}

From Proposition \ref{prop:f} (i), we have 
$ D_{\varphi} (h , k)_{X} 
 \geq D_{\varphi} (h_{t} , k)_{X} $ for all $t >0$. 
On the other hand, 
from Proposition \ref{prop:f} (ii), 
there exists $t_{1} >0$ such that 
$\max_{X} \left| 
i ( \Lambda F_{h_{t}} 
   + * [ \varphi , \bar{\varphi}^{h_{t}}] ) - \frac{\lambda (E)}{2} Id_{E} 
\right| < 1 $ 
for $t \geq t_{1}$. 
Thus, we get 
$$ D_{\varphi} (h,k)_{X} 
 \geq D_{\varphi} (h_{t} ,k)_{D} -C -C' $$
for $t \geq t_{1}$ by Proposition \ref{prop:fr}. 
From Proposition \ref{th:Cardona}, 
$D_{\varphi} (h_{t} , k)_{D}$ is bounded  below, thus so is  
$D_{\varphi} (h,k)_{X}$.

Once the lower bound of the functional is obtained, 
the argument originally implemented by Donaldson \cite[\S 3]{D} works 
for the Vafa--Witten case as we describe it below.

Firstly, the Uhlenbeck type weak compactness theorem for the
Vafa--Witten equations by Mares \cite[\S 3.3]{BM} applies 
to give an $L^{p}_{1}$ limit along the flow away from a finite set of
points. 
On the other hand, since we have the lower bound for the functional
$D_{\varphi} ( \cdot)$, 
$C^{0}$-norm of $i ( \Lambda F_{h_{t}} 
   + * [ \varphi , \bar{\varphi}^{h_{t}}] ) - \frac{\lambda (E)}{2} Id_{E} 
$ converges to zero by Proposition \ref{prop:f} (iii). 
Hence the above $L^{p}_{1}$ weak limit satisfies the Vafa--Witten
equations. 
We then invoke the removal singularity theorem for the Vafa--Witten
equation by Mares \cite[\S 3.4]{BM} to obtain a bundle $E'$ and a
solution to the Vafa--Witten equations across the singular set. 
This bundle is semi-stable in the sense of Definition \ref{def:stable}
as it admits a solution to the Vafa--Witten equations. 
One then constructs a non-zero holomorphic map between $E$ and $E'$. 
From the assumption that $E$ is stable, this map is isomorphism. 
Hence, we a obtain a solution to the Vafa--Witten equations on $E$. 
The uniqueness of the solution follows from the
convexity of the functional $D_{\varphi} ( \cdot)$. 
\qed


\addcontentsline{toc}{chapter}{Bibliography}

\begin{flushleft}
National Center for Theoretical Sciences (South), 
National Cheng Kung University, Tainan 701, Taiwan \\
yu2tanaka@gmail.com
\end{flushleft}


\begin{thebibliography}{CDFN83}


\bibitem[AG]{AG}
L. \'{A}lvarez-C\'{o}nsul and O. Garc\'{i}a-Prada, 
{\it Hitchin--Kobayashi correspondence, quivers
and vortices}, 
Commun. Math. Phys. {\bf 23} (2003), 1--33. 


\bibitem[BGM]{BGM}
S. B. Bradlow, O. Garc\'{i}a-Prada and I. Mundet i Riera, 
{\it Relative Hitchin--Kobayashi correspondences for principal pairs}, 
Quart. J. Math. {\bf 54} (2003), 171--208. 



\bibitem[BG]{BG}
S. B. Bradlow and T. L. G\'{o}mez, 
{\it Extensions of Higgs bundles}, 
Illinois J. Math. {\bf 46} (2002), 587--625. 


\bibitem[D]{D}
S. K. Donaldson, 
{\it Anti self-dual Yang--Mills connections over complex algebraic
	   surfaces and stable vector bundles}, 
Proc. London Math. Soc. {\bf 50} (1985), 1--26. 


\bibitem[GR]{GR}
O. Garc\'{\i}a-Prada and S. Ramanan, 
{\it Twisted Higgs bundles and the fundamental group of compact
	    K\"{a}hler manifolds}, 
Math. Res. Lett. {\bf 7} (2000), 517--535. 


\bibitem[Ha]{Ha}
A. Haydys, 
{\it Fukaya--Seidel category and gauge theory}, 
arXiv:1010.2353. 


\bibitem[Hi]{Hit}
N. Hitchin, 
{\it The self-duality equations on a Riemann surface}, 
Proc. London Math. Soc. {\bf 55} (1987), 91--114. 


\bibitem[Ko]{Ko}
S. Kobayashi, 
{\it Differential geometry of complex vector bundles}, volume 15 of
{Publications of the Mathematical Society of Japan},  
Princeton University Press, 1987.


\bibitem[L]{L}
T. R. Lin, 
{\it Hermitian--Yang--Mills--Higgs metrics and stability for holomorphic
	   vector bundles with Higgs fields over a compact K\"{a}hler
	   manifold}, 
UC San Diego Ph.D thesis, 1989. 


\bibitem[M]{BM}
B. Mares, 
{\it Some Analytic Aspects of Vafa--Witten Twisted $\mathcal{N}=4$
	       Supersymmetric Yang--Mills theory}, 
M.I.T. Ph.D thesis, 2010. 


\bibitem[MR1]{MR1}
V. B. Mehta and A. Ramanathan, 
{\it Semistable sheaves on projective varieties and their restriction to
	     curves}, 
Math. Ann. {\bf 258} (1982), 213--224. 


\bibitem[MR2]{MR2}
V. B. Mehta and A. Ramanathan, 
{\it Restriction of stable sheaves and representations of the
	     fundamental group}, 
Invent. Math. {\bf 77} (1984), 163--172. 



\bibitem[S1]{Si}
C. T. Simpson, 
{\it Constructing Variation of Hodge Structure Using Yang--Mills Theory
	    and Applications to Uniformization}, 
J. Amer. Math. Soc. {\bf 1}, (1988), 867--918. 


\bibitem[S2]{Si2}
C. T. Simpson, 
{\it Higgs bundles and local systems}, 
Inst. Hautes \'{E}tudes Sci. Publ. Math. {\bf 5} 
(1992), 5--95. 


\bibitem[T1]{tanaka2}
Y. Tanaka, 
{\it On the moduli space of Donaldson--Thomas instantons}, 
arXiv:0805.2192v3.


\bibitem[T2]{tanaka6}
Y. Tanaka, 
{\it Some boundedness property of solutions to 
the Vafa-Witten equations on
	closed four-manifolds}, arXiv:1308.0862v3. 

\bibitem[VW]{VW}
C. Vafa and E. Witten, 
{\it A strong coupling test of $S$-duality}, 
Nucl. Phys. B, {\bf 432}, (1994), 484--550. 


\bibitem[W]{W}
E. Witten, 
{\it Fivebranes and Knots}, 
Quantum Topol. {\bf 3} (2012), 1--137. 


\end{thebibliography}
\end{document}